\documentclass[11pt,reqno]{amsart} 

\usepackage[letterpaper,left=1.5in,right=1.5in]{geometry}

\usepackage{amssymb}
\usepackage[mathscr]{eucal}

\DeclareMathAlphabet{\mathpzc}{OT1}{pzc}{m}{it}

\usepackage{mathtools} 
\usepackage{yhmath} 
\usepackage{shuffle}
\usepackage{extarrows}

\usepackage{color}
\usepackage[width=.75\textwidth]{caption}
\usepackage{subcaption}

\usepackage{tikz}
\usetikzlibrary{cd,patterns}

\usepackage{graphicx,multicol}

\usepackage[%
colorlinks,
linkcolor=blue!80!black, 
citecolor=green!80!black, 
urlcolor=magenta!80!black,
]{hyperref}

\usepackage[backend=biber,maxbibnames=99,sorting=nyt]{biblatex}
\newtheorem{theorem}{Theorem}
\newtheorem{corollary}{Corollary}
\newtheorem{lemma}{Lemma}
\newtheorem{proposition}{Proposition}

\theoremstyle{definition}
\newtheorem{definition}{Definition}
\newtheorem{remark}{Remark}
\newtheorem{example}{Example}

\definecolor{myred}{rgb}{.7,.1,.1}
\definecolor{myblue}{rgb}{.1,.1,.7}
\definecolor{mygreen}{rgb}{.1,.6,.1}
\definecolor{mygray}{rgb}{0.5,0.5,0.5}
\definecolor{mymauve}{rgb}{0.58,0,0.82}
\definecolor{lablue}{rgb}{0,0.9,0.93}

\usepackage[textwidth=1in,textsize=normalsize]{todonotes} 

\newcommand{\demph}[1]{\textcolor{myblue}{\emph{#1}}}

\def\qand{\quad\hbox{and}\quad}

\def\ds{\displaystyle}

\mathchardef\mhyphen="2D 

\def\End{\operatorname{\mathrm{End}}}
\def\Span{\operatorname{\mathrm{span}}}
\def\id{\mathrm{id}}
\def\tw{\mathrm{tw}}

\def\into{\hookrightarrow}
\def\onto{\twoheadrightarrow}

\def\port{\mathsf{port}}

\def\splus{\mathbin{\boldsymbol{+}}}
\def\stimes{\mathbin{\boldsymbol{\times}}} 
\def\sdot{\mathbin{\boldsymbol{\cdot}}} 
	\def\tsdot{\textstyle{\sdot}}
\def\scirc{\mathbin{\boldsymbol{\circ}}} 

\def\dototimes{\mathrel{\dot{\otimes}}}

\def\Spec{\mathsf{Sp}}
\def\Set{\mathsf{Set}}
\def\Vec{\mathsf{Vec}}

\def\bimon{\mathsf{Bimon}}

\def\sym{\mathrm{Sym}}
\def\qsym{\mathrm{QSym}}
\def\nsym{\mathrm{NSym}}
\def\ncsym{\mathrm{NCSym}}
\def\ncqsym{\mathrm{NCQSym}}
\newcommand{\rqsym}[1][r]{{}^{#1}\qsym}

\def\K{\mathcal K}
\def\Kbar{\overline{\mathcal K}}
\def\Kvee{{\mathcal K}^\vee}
\def\Kbv{\overline{\mathcal K}^\vee}

\def\bfPi{\mathbf{\Pi}}
\def\bfG{\mathbf G}
\def\bfE{\mathbf E}
\def\bfL{\mathbf L}
\def\bij{\mathbf{bij}}
\def\cyc{\mathbf{cyc}}

\def\bfB{\mathbf{B}}

\def\b{\mathbf b} 
\def\d{\mathbf d} 
\def\h{\mathbf h}
\def\p{\mathbf p}
\def\q{\mathbf q}
\def\a{\mathbf a}
\def\bfc{\mathbf c}

\def\rfp{\mathrm p}
\def\rfG{\mathrm G}
\def\rfE{\mathrm E}
\def\rfL{\mathrm L}

\def\rfB{\mathrm{B}}

\def\ZZ{\mathbb Z}

\newcommand{\teebar}[2][r]{\rspec[#1]{#2}}
\newcommand{\tee}[2]{\operatorname{\cfC_{#1}}(#2)}

\newcommand{\rspec}[2][r]{\prescript{\,#1\,}{}{#2}}

\newcommand{\upspec}[2][r]{{#2}_{\underbracket[.6pt][.25ex]{\scriptstyle\hspace{.15em}#1\hspace{.05em}}}}
\newcommand{\downspec}[2][r]{{#2}_{\wideparen{\scriptstyle\hspace{.1em}#1\hspace{.1em}}}}

\newcommand{\rc}[1][r]{\prescript{#1}{}{\bfc}}
\def\sc{\rc[s]}
\def\cfC{\mathcal{C}}
\def\cfT{\mathcal{T}}
\def\cfS{\mathcal{S}}

\newcommand{\threegraph}[6]{%
\vcenter{\hbox{%
\begin{tikzpicture}[scale=0.75, shorten <=4pt, shorten >=4pt] 
  \coordinate (A) at (0, 0);
  \coordinate (B) at (0.55, -0.85);  
  \coordinate (C) at (1.1, 0);

  \node at (A) {$#1$};
  \node at (B) {$#2$};
  \node at (C) {$#3$};

  \ifnum#4=1 \draw (A) -- (B); \fi
  \ifnum#5=1 \draw (A)--(C); \fi
  \ifnum#6=1 \draw (B)--(C); \fi
\end{tikzpicture}%
}}%
}

\newcommand{\threegraphline}[7][1]{%
\vcenter{\hbox{%
\begin{tikzpicture}[scale=0.75, shorten <=3pt, shorten >=3pt] 
  \coordinate (A) at (0, 0);
  \coordinate (B) at (0.6, -0.2);
  \coordinate (C) at (1.2, 0);

  \node at (A) {$#2$};
  \node at (B) {$#3$};
  \node at (C) {$#4$};
\ifnum#1=0
  \ifnum#5=1 \draw[bend right=25] (A) to (B); \fi
  \ifnum#6=1 \draw[bend left=55] (A) to (C); \fi
  \ifnum#7=1 \draw[bend right=25] (B) to (C); \fi
\fi
\ifnum#1=1
  \ifnum#5=1 \draw[] (A) to (B); \fi
  \ifnum#6=1 \draw[] (A) to (C); \fi
  \ifnum#7=1 \draw[] (B) to (C); \fi
\fi
\end{tikzpicture}%
}}%
}

\newcommand{\twograph}[3]{%
\vcenter{\hbox{%
\begin{tikzpicture}[scale=0.8, shorten <=5.5pt, shorten >=9pt] 
  \coordinate (A) at (0,0);
  \coordinate (B) at (1,0);

  \node at (A) {$#1$};
  \node at (B) {$#2$};

  \ifnum#3=1 \draw (A)--(B); \fi
\end{tikzpicture}%
}}%
}

\newcommand{\onegraph}[1]{%
\vcenter{\hbox{%
\begin{tikzpicture}[scale=0.9, shorten <=4pt, shorten >=4pt] 
  \node at (0,0) {$#1$};
\end{tikzpicture}%
}}%
}

\begin{document}

\title{Hopf Substitutions in Species}

\author{Aaron Lauve}
\address{Department of Mathematics and Statistics, Loyola University Chicago -- Chicago, IL, USA}
\email{alauve@luc.edu} 


\author{Anthony Lazzeroni}
\address{Department of Mathematics, North Park University -- Chicago, IL, USA}
\email{aalazzeroni@northpark.edu}

\date{April 12, 2026}

\begin{abstract}%
In the theory of species, the species $\mathbf{L}$ of linear orders and the substitution operation $\boldsymbol{\circ}$ combine for a compelling result: given any positive comonoid $\mathbf{p}$, $\mathbf{L}\boldsymbol{\circ}\mathbf{p}$ carries the structure of Hopf monoid, freely generated by $\mathbf{p}$. Leaving aside the universal property this implies, we ask, \emph{for which $\mathbf{b}$ does $\mathbf{b}\boldsymbol{\circ}\mathbf{p}$ carry the structure of Hopf monoid?} After answering this question, we look at basic properties of our construction. We also extend a result of the present authors, on interpolation in species, to this new context.
\end{abstract}

\keywords{Hopf monoids, species, graded Hopf algebras, operads}

\maketitle



\section{Introduction} 
We begin in the monoidal category of (vector) species under Cauchy product $(\Spec,\sdot)$. After the work of Joni--Rota, Joyal, and others \cite{joni1979coalgebras,joyal1981species,schmitt1993hopf,stover1993equivalence,bergeron1998combinatorial}, this seems to be the natural setting for understanding Hopf algebraic combinatorics. (Starting from here, the ``Fock functors'' introduced by Stover take Hopf monoids in $(\Spec,\sdot)$ to a wealth of Hopf algebras in $(\Vec, \otimes)$ of combinatorial interest.) In \cite{aguiar2010monoidal}, a systematic study of different operations ($\splus,\sdot,\stimes,\scirc$) and the Fock functors ($\K, \Kbar, \Kvee, \Kbv$) on species is undertaken to better organize the spoils of the work cited above. For example, one may ask, \emph{if $\b$ and $\p$ are Hopf monoids in species, what about $\b\sdot \p$ or $\b\stimes\p$?} The answer here is, ``yes,'' see \cite[\S\S~1.2.7 \& 8.13.1]{aguiar2010monoidal} for proofs. One benefit of this perspective is immediate: even exceptionally rich objects in $(\Vec,\otimes)$ originate from simple building blocks with these operations. (A salient example here would be the infamous Malvenuto--Reutenauer Hopf algebra of permutations, arising from $\bfL\stimes\bfL^*$. Here $
\bfL$ is the Hopf monoid of linear orders; see \cite[\S 17.2.1]{aguiar2010monoidal} for details.)

In this note, we ask the same question about $\b\scirc \p$. Our answer is, briefly, ``whenever $\b$ is a cocommutative linearized Hopf monoid (and $\p$ is any positive comonoid).'' This extends the constructions of $\cfT(\p)=\bfL\scirc\p$ and $\cfS(\p)=\bfE\scirc\p$ in \cite{aguiar2010monoidal} to a much larger class of Hopf monoids. 

After introducing the new object, $\tee{\b}{\p}$, we give basic structural results and related Hopf algebraic examples. Our final result is an extension of the main theorem of \cite{lauve202xinterpolation} to the present context: we give a one-parameter family of Hopf monoids $\bigl\{\rspec{\tee{\b,\d}{\p,\q}}\bigr\}_{r\geq1}$ interpolating between $\tee{\b}{\p}$ and $\tee{\d}{\q}$ under suitable conditions on $\b,\d,\p,\q$.


\section{Preliminaries}
\label{sec:prelims}

In this section we briefly catalog what is needed, referring the reader to \cite{aguiar2010monoidal, bergeron1998combinatorial} for omitted definitions, details and applications of the theory of species.

\subsection{Species and basic constructions}
\label{sec:species}

Let $\Set$ (respectively, $\Vec$) be the category of sets (vector spaces), with morphisms being arbitrary set maps (vector space maps). Let $\Set^{\times}$ be the category of finite sets with bijections as morphisms. A \demph{set species} is a functor $\rfp:\Set^{\times} \to \Set$ (assigning a set $\rfp[I]$ for each $|I|<\infty$, and bijections $\rfp[\sigma]:\rfp[I] \to \rfp[J]$ for each bijection $\sigma:I\to J$). A \demph{(vector) species} $\p:\Set^{\times} \to \Vec$ is defined similarly.

Call $\p$ \demph{linearized} if there is a set species $\rfp$ so that $\p[I] = \Bbbk\rfp[I]$ for all $I$ and $\p[\sigma] = \Bbbk\rfp[\sigma]$ for all $\sigma:I\to J$. 

\begin{example}\label{ex:common examples}
Four common linearized species that will feature prominently in what follows are: 
the \emph{trivial species} $\mathbf 1$; 
the \emph{exponential species} $\mathbf E$; 
the species $\bfL$ of \emph{linear orders} ({\it i.e.}, total orderings or lists) of $I$; and 
the species $\bfG$ of \emph{finite simple graphs} with vertex set $I$. Definitions as in \cite{aguiar2010monoidal}. 
For instance, $\bfG[\{a,b,c\}]$ is the $\Bbbk$-span of 
\begin{gather}
     \label{eq:graphs-abc}
\threegraph{a}{b}{c}{1}{1}{1}
\ \ \ 
\threegraph{a}{b}{c}{1}{1}{0}
\ \ \ 
\threegraph{a}{b}{c}{1}{0}{1}
\ \ \ 
\threegraph{a}{b}{c}{1}{0}{0}
\ \ \ 
\threegraph{a}{b}{c}{0}{1}{1}
\ \ \ 
\threegraph{a}{b}{c}{0}{1}{0}
\ \ \ 
\threegraph{a}{b}{c}{0}{0}{1}
\ \ \ 
\threegraph{a}{b}{c}{0}{0}{0}
\end{gather}
(The first and last graph are the complete graph and edgeless graph, respectively.) 
\end{example}

Call $\p$ \demph{connected} if $\p[\emptyset]\cong\Bbbk$ and \demph{positive} if $\p[\emptyset]=\{0\}$.  
%
%
%
For $n\geq 0$, let $\p_n$ be the new species concentrated in cardinality $n$; define $\p_+$ analogously.
\[
\p_n[I] = 
\begin{cases}
    \p[I] & \text{if } |I|=n, \\
    \{0\} & \text{otherwise};
\end{cases}
\qquad
\p_+[I] = 
\begin{cases}
    \p[I] & \text{if } |I|>0, \\
    \{0\} & \text{otherwise}.
\end{cases}
\]

Below and throughout: $S \sqcup T=I$ indicates an ordered decomposition of a set $I$ into (possibly empty) subsets; while $X\vdash I$ indicates an ordinary, unordered partition of $I$.

\begin{definition}\label{def:operations}
The \demph{sum}, \demph{Hadamard product}, \demph{Cauchy product}, and \demph{substitution product} of two species $\p$ and $\q$ are defined, respectively, by
\begin{align*}
(\p\splus\q)[I] &:= \p[I] \oplus \q[I] \\
(\p\stimes\q)[I] &:= \p[I]\otimes\q[I] \\
(\p\sdot\q)[I] &:= \bigoplus_{S\sqcup T = I} \p[S]\otimes \q[T] \\
(\p\scirc\q_+)[I] &:= \bigoplus_{X \vdash I} \p[X] \otimes \q(X),
\end{align*}
where: $(\p\scirc\q_+)[\emptyset]$ is defined to be $\p[\emptyset]$; and $\q(X)$ is shorthand for the unordered tensor product $\bigotimes_{A\in X} \q[A]$. 
\end{definition}

\begin{example}\label{ex:basic examples}
We use the operations above to rewrite some of the species introduced in Example \ref{ex:common examples} and introduce some new ones that will be important in what follows.
\begin{itemize}
\item Any species is built from its restrictions via infinite sum. Or, viewed the other way around, every species $\p$ has a \demph{canonical decomposition} $\p = \sum_{n\geq0} \p_n$ \cite[Ex.~1.3.6]{bergeron1998combinatorial}. (Above, we wrote $\p_+$ for $\sum_{n>0} \p_n$.) Also, we put $\p_{<r}:=\sum_{0\leq n<r}\p_n$ and $\p_{\geq r}:=\sum_{r\leq n}\p_n$.
\item The trivial species $\bf1$ is $\bfE_1$.
\item The species of {linear orders} $\bfL$, is, in fact, ${\bf1} \splus \bfE_1 \splus (\bfE_1\sdot\bfE_1) \splus \cdots = \sum_{n\geq0} \bfE_1^{{\tsdot} n}$.
\item The species of \demph{set partitions} $\bfPi$ is $\bfE\circ\bfE_+$. 
\end{itemize}
\end{example}

\begin{example}\label{ex:more examples}
Further shorthand and examples for substitution products $(\p\scirc\q_+)$. 
\begin{itemize}
\item Given $p\in \p[X]$ and elements $\{q_A \in \q[A] \mid A\in X\}$, we often write $p\otimes q_{(X)}$ for the simple tensor $p\otimes \bigotimes_{A\in X} q_A$. 

\item If $\p$ and $\q$ are linearized, a useful perspective views $\mathrm{p}\circ\mathrm{q}$-structures as $\mathrm{p}$-structures placed on a set of $\mathrm{q}$-structures. See the next two items as well as Figure \ref{fig:GoL-example}. 

\item The $(\rfL\circ\rfG)$-structures indexing a basis of $(\bfL\scirc\bfG_+)[\{a,b,c\}]$ come in a variety of types (depending on the cardinality of the underlying set partition), {\it e.g.},
\begin{align*}
X=\{\{a,b,c\}\}: \ \ & 
\bigl(\!\!\threegraphline[0]{a}{b}{c}{1}{1}{1}\!\!\bigr) \quad \ \ 
\bigl(\!\threegraphline[1]{b}{a}{c}{1}{0}{1}\!\bigr) \quad \ \ 
\bigl(\!\threegraphline[1]{a}{b}{c}{1}{0}{1}\!\bigr) \quad \ \ 
(a \ \ \ b\ \ \ c) \\ 
X=\{\{b,c\},\{a\}\}: \ \ & 
(a|b{-}c) \quad \ \ 
(a|b{\,\ }c) \quad \ \ 
(b{-}c|a) \quad \ \ 
(b{\,\ }c|a) \\
X=\{\{c\},\{a\},\{b\}\}: \ \ & 
(a|b|c) \quad \ \ (a|b|c) \quad \ \ 
(b|a|c) \quad \ \ (b|c|a) \quad \ \ 
(c|a|b) \quad \ \ (c|b|a)
\end{align*}
Above, ``$|$'' separates different elements (simple graphs) in the lists, and ``$-$'' indicates edges in the simple graphs.\footnote{{\bf N.B.} we also use ``$|$'' for assorted restriction operations.} 
 (See \eqref{eq:graphs-abc} for the complete list when $X=\{\{a,b,c\}\}$.) 

\item A $\rfG\circ\rfL_+$ structure is a simple graph on a vertex set of \emph{tuples} $l^{\,i} = (\ell^{\,i}_{\,1}\,|\,{\ell^{\,i}_{\,2}}\,|\,\cdots)$. {\it E.g.}, the $\rfG\circ\rfL_+$ structures on $I = \{a,b,c\}$ are the three-vertex graphs in \eqref{eq:graphs-abc} together with the graphs below.
\begin{align}
\notag
\twograph{(a)}{(b|c)}{1} &&
\twograph{(a)}{(c|b)}{1} &&
\twograph{(b)}{(a|c)}{1} &&
\twograph{(b)}{(c|a)}{1} &&
\twograph{(c)}{(a|b)}{1} &&
\twograph{(c)}{(b|a)}{1}
\\[.5ex]
\label{eq:graphs-GL}
\twograph{(a)}{(b|c)}{0} &&
\twograph{(a)}{(c|b)}{0} &&
\twograph{(b)}{(a|c)}{0} &&
\twograph{(b)}{(c|a)}{0} &&
\twograph{(c)}{(a|b)}{0} &&
\twograph{(c)}{(b|a)}{0}
\\[.25ex]
\notag
\onegraph{(a|b|c)} \ &&
\onegraph{(a|c|b)} \ &&
\onegraph{(b|a|c)} \ &&
\onegraph{(b|c|a)} \ &&
\onegraph{(c|a|b)} \ &&
\onegraph{(c|b|a)}
\end{align}
(The first and second row in \eqref{eq:graphs-GL} shows connected and disconnected graphs on two nodes, respectively; the third row shows graphs on one node.)
\end{itemize}
\end{example}

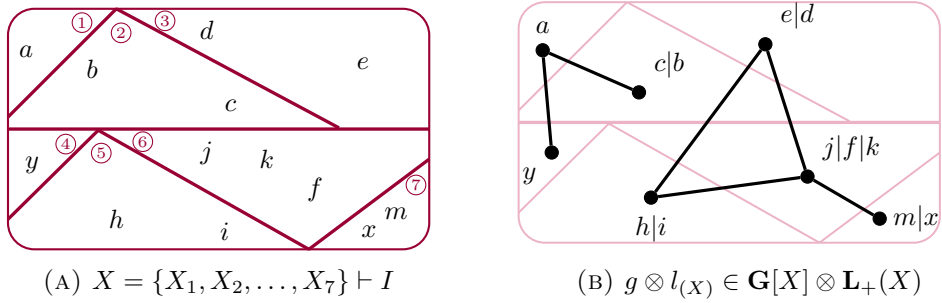
\begin{figure}[htb]
\centering
\begin{subfigure}[t]{.45\textwidth}
	\centering
    \begin{tikzpicture}[scale=.80, purp/.style={color=purple!80!black,inner sep=.75pt}]
    
      \draw[style=purp,line width=1.25pt,rounded corners=.5pt] (0,2.2) -- (1.8,4) -- (5.5,2.02); 
      \draw[style=purp,line width=1.25pt,rounded corners=.5pt] (0,0.5) -- (1.5,1.98) -- (5,0) -- (7,1.5); 
    
      \draw[style=purp,rounded corners=10,thick]
         (0,0) rectangle (7,4);
      \draw[style=purp,line width=1.25pt] (0,2) -- (7,2);
    
      \node[style=purp,circle,draw] (z1) at (1.23,3.77) {\footnotesize$\scriptstyle1$};
      \node[style=purp,circle,draw] (z2) at (1.88,3.60) {\footnotesize$\scriptstyle2$};
      \node[style=purp,circle,draw] (z3) at (2.61,3.80) {\footnotesize$\scriptstyle3$};
      \node[style=purp,circle,draw] (z4) at (0.96,1.75) {\footnotesize$\scriptstyle4$};
      \node[style=purp,circle,draw] (z5) at (1.55,1.60) {\footnotesize$\scriptstyle5$};
      \node[style=purp,circle,draw] (z6) at (2.24,1.79) {\footnotesize$\scriptstyle6$};
      \node[style=purp,circle,draw] (z7) at (6.78,1.07) {\footnotesize$\scriptstyle7$};
    
      \node (a) at (.3,3.3) {\small$a$};
      \node (b) at (1.4,3) {\small$b$};
      \node (d) at (3.3,3.65) {\small$d$};
      \node (e) at (5.9,3.1) {\small$e$};
      \node (c) at (3.7,2.4) {\small$c$};
      \node (h) at (1.8,.5) {\small$h$};
      \node (f) at (5.1,1) {\small$f$};
      \node (i) at (3.6,.3) {\small$i$};
      \node (j) at (3.3,1.6) {\small$j$};
      \node (k) at (4.3,1.5) {\small$k$};
      \node (m) at (6.45,.6) {\small$m$};
      \node (x) at (6,.3) {\small$x$};
      \node (y) at (.4,1.4) {\small$y$};
    \end{tikzpicture}
	\caption{$X = \{X_1, X_2, \ldots, X_7\} \vdash I$}
\end{subfigure}
\quad \ \ 
\begin{subfigure}[t]{.45\textwidth}
    \begin{tikzpicture}[scale=.80, 
    	vert/.style={draw,fill,circle,inner sep=0,minimum height=5pt},
    	purp/.style={color=purple!30!white,inner sep=.75pt}
    	]
    
      \draw[style=purp,line width=.75pt,rounded corners=.5pt] (0,2.2) -- (1.8,4) -- (5.5,2.02); 
      \draw[style=purp,line width=.75pt,rounded corners=.5pt] (0,0.5) -- (1.5,1.98) -- (5,0) -- (7,1.5); 
    
      \draw[style=purp,rounded corners=10,line width=.75pt]
         (0,0) rectangle (7,4);
      \draw[style=purp,line width=1.25pt] (0,2) -- (7,2);
    
      \node[style=vert, label=above:{\small$a$}] (a) at (.4,3.2) {};
      \node[style=vert, label=above right:{\small$c|b$}] (cb) at (2,2.5) {};
      \node[style=vert, label=above right:{\small$e|d$}] (ed) at (4.1,3.3) {};
      \node[style=vert, label=below left:{\small$y$}] (y) at (.55,1.5) {};
      \node[style=vert, label=below:{\small$h|i$}] (hi) at (2.2,.75) {};
      \node[style=vert, label=above right:{\small$j|f|k$}] (jfk) at (4.8,1.1) {};
      \node[style=vert, label={[label distance=-0.05cm]0:{\small$m|x$}}] (xm) at (6.0,.4) {};
     
      \draw[very thick] (y) -- (a) -- (cb);
      \draw[very thick] (xm) -- (jfk);
      \draw[very thick] (hi) -- (ed) -- (jfk) -- (hi);
    \end{tikzpicture}
	\caption{$g\otimes l_{(X)} \in \bfG[X]\otimes \bfL_+(X)$}
\end{subfigure}
\caption{A $(\rfG\circ\rfL_+)[I]$ structure is a simple graph on a set of linear orders $\{l_{X_i}\}_{X_i\in X}$.}
\label{fig:GoL-example}
\end{figure}

\subsection{Monoidal structures}
\label{sec:monoidal structures}

The category $\Spec$ is a braided monoidal category under Cauchy product, with $\mathbf1$ as the unit object (and trivial brading, $u\otimes v \mapsto v\otimes u$). 
 
Let $(\mathsf{C},\bullet)$ be a monoidal category with unit element $\mathrm{U}$. Recall a \demph{monoid} in $\mathsf{C}$ is an object $\mathrm B$ equipped with morphisms $\mu:\mathrm B\bullet \mathrm B \to \mathrm B$ and $\iota:\mathrm{U}\to \mathrm B$ satisfying certain associativity and unital axioms.
Similarly, $\mathrm B$ is a \demph{comonoid} in $\mathsf{C}$ if it's equipped with morphisms $\Delta:\mathrm B \to \mathrm B \bullet \mathrm B$ and $\epsilon:\mathrm B \to \mathrm{U}$ satisfying certain dual axioms.
Finally, $\mathrm B$ is a \demph{bimonoid} in (a braided monoidal category) $\mathsf{C}$ if these maps are compatible (briefly, $\Delta$ and $\epsilon$ are monoid-morphisms); and this bimonoid $\mathrm{B}$ is a \demph{Hopf monoid} in $\mathsf{C}$ if the identity morphism $\id$ on $\mathrm B$ is invertible in the convolution monoid, $(\End(\mathrm{B}), \boldsymbol{\ast})$. (Here, the convolution $f\mathrel{\boldsymbol{\ast}}g$ of two morphisms is defined as the composition $\mu \circ (f\bullet g)\circ \Delta$.) If $\id^{-1}$ exists, we call it the \demph{antipode}---ordinarily denoted ``$\mathrm{s}$.''

Below, we deal exclusively with connected species, so most  axioms involving the (co)units are trivial. We display the remaining axioms in our context.

\medskip\noindent
\emph{Associativity axiom for monoids.} For all decompositions $S\sqcup T$ of a finite set $I$, there exist morphisms $\mu_{S,T}:\b[S]\otimes \b[T] \to \b[I]$ making the following diagram commute:
\begin{equation}
\label{eq:associative}
\begin{tikzcd}
    \b[R]\otimes\b[S]\otimes\b[T] \arrow[rr, "\id_R\otimes \mu_{S,T}"]\arrow[d, "\mu_{R,S}\otimes \id_T"'] & & \b[R]\otimes\b[S\sqcup T] \arrow[d, "\mu_{R,S\sqcup T}"]\\
    \b[R\sqcup S]\otimes\b[T] \arrow[rr, "\mu_{R\sqcup S,T}"'] & & \b[R\sqcup S\sqcup T]
\end{tikzcd}
\end{equation}
%
\begin{remark}
A key word above is \emph{morphism}. That is, $\mu$ must also respects bijections. Specifically, if $\sigma:I \to J$ is a bijection with $S'=\sigma(S)$ and $T'=\sigma(T)$, then we need $\b[\sigma]\circ \mu_{S,T} = \mu_{S',T'}\circ \bigl( \b[\sigma\vert_{S}]\otimes \b[\sigma\vert_{T}]\bigr)$. Such checks are generally trivial and are omitted.
\end{remark}

\medskip\noindent
\emph{Coassociativity axiom for comonoids.} For all decompositions $S\sqcup T$ of a finite set $I$, there exist morphisms $\Delta_{I,J}: \b[I] \to \b[S]\otimes \b[T]$ making the following diagram commute:

\begin{equation}
\label{eq:coassociative}
\begin{tikzcd}
    \b[R\sqcup S\sqcup T] \arrow[rr, "\Delta_{R,S\sqcup T}"]\arrow[d, "\Delta_{R\sqcup S,T}"'] & & \b[R]\otimes\b[S\sqcup T] \arrow[d, "\id_S\otimes \Delta_{S,T}"]\\
    \b[R\sqcup S]\otimes\b[T] \arrow[rr, "\Delta_{R,S}\otimes \id_T"'] & & \b[R]\otimes\b[S]\otimes\b[T]
\end{tikzcd}
\end{equation}
%

\medskip\noindent
\emph{Compatibility axiom for binomoids.} 
Let $\tw:V\otimes W \to W\otimes V$ be the standard braiding on vector spaces over $\Bbbk$. Given two decompositions $S\sqcup T$ and $S'\sqcup T'$ of a finite set $I$, put 
\begin{gather}
\label{eq:ABCD}
	A=S\cap S', \ \ B = S\cap T', \ \ C = T\cap S', \qand D = T\cap T'. 
\end{gather}
Then the following diagram commutes:
\begin{equation}
\label{eq:compatible-delta}
\begin{tikzcd}
    \b[A]\otimes\b[B]\otimes\b[C]\otimes\b[D] \arrow[rr, "\id_A\otimes \tw \otimes \id_D"] & & \b[A]\otimes\b[C]\otimes\b[B]\otimes\b[D] \arrow[d, "\mu_{A,C}\otimes \mu_{B,D}"]\\
    \b[S]\otimes\b[T] \arrow[r, "\mu_{S,T}"'] \arrow[u, "\Delta_{A,B}\otimes\Delta_{C,D}"] & \b[I] \arrow[r,"\Delta_{S',T'}"'] & \b[S']\otimes\b[T'] 
\end{tikzcd}
\quad\mbox{\ }   
\end{equation}
%

\smallskip\noindent
\emph{Convolution-invertibility of $\id$.} Note that the identity morphism in $\End(\b)$ under convolution is $\iota\circ\epsilon$. So, for all finite sets $I$, we require morphisms $\mathrm{s}_I:\b[I] \to \b[I]$ making the following diagram (and its ``right invertible'' counterpart) commute:
\begin{equation}
\label{eq:antipode}
\begin{tikzcd}[column sep=tiny]
    {\displaystyle\bigoplus_{S\sqcup T}}\,\b[S]\!\otimes\!\b[T] \arrow[rr, "\mathrm{s}_S\otimes \id_T"] & & {\displaystyle\bigoplus_{S\sqcup T}}\, \b[S]\!\otimes\!\b[T] \arrow[d, start anchor={[yshift=1.5ex]south}, yshift=.25ex, "\oplus\mu_{S,T}"]\\
    \b[I] \arrow[r, "\epsilon_I"'] \arrow[u, end anchor={[yshift=1.5ex]south}, yshift=.25ex, "\oplus\Delta_{S,T}"'] & \mathbf1[I] \arrow[r,"\iota_I"'] & \b[I]\,.
\end{tikzcd}
\end{equation}

\begin{remark}
\label{rem:hopf-emptyset}
    (a). Note \eqref{eq:antipode} differs from \eqref{eq:associative}, \eqref{eq:coassociative}, and \eqref{eq:compatible-delta} in that we cannot compare maps one decomposition of $I$ at a time. Things aren't as bad as they seem: after \cite[Prop. 8.10]{aguiar2010monoidal}, $\b$ is a Hopf monoid if and only if $\b[\emptyset]$ is a Hopf algebra. This means it suffices to check \eqref{eq:antipode} in the case $I=\emptyset$. 

 (b).
If $\b$ is connected, then $\b[\emptyset]=\Bbbk$ is trivially a Hopf algebra. (So we make no further mention of antipodes in what follows.)
\end{remark}

\subsection{Three elementary examples of Hopf monoids: $\bfL$, $\bfE$, and $\bfG$}\label{sec:LEG definitions}

\mbox{}

\smallskip
\noindent
$\bfL$.~Given linear orders $l,l'$ of disjoint sets $S$ and $T$, respectively, write $l\cdot l'$ for the (unique) linear ordering of $S\sqcup T$ that places all of $S$ before all of $T$ and maintains the relative ordering within $S$ and $T$. That is, $(l\cdot l')\vert_S = l$ and $(l\cdot l')\vert_T = l'$. The two operations just indicated afford $\bfL$ the structure of Hopf monoid. Concretely, 
\begin{gather*}
\mu_{S,T}(l \otimes l') = l\cdot l'
\qand
\Delta_{S,T}(l) = l\bigr\vert_{S} \otimes l\bigr\vert_{T\,}.
\end{gather*}

\noindent$\bfE$.~The Hopf monoid of exponential species has $\bfE[I] = \Bbbk\{\ast_I\}$ for all finite $I$. The structure maps are:
\(\ds
\mu_{S,T}(\ast_S \otimes \ast_T) = \ast_{S\sqcup T}
\qand
\Delta_{S,T}(\ast_I) = \ast_{S} \otimes \ast_{T}.
\)

\smallskip
\noindent$\bfG$.~Let $g=(I,E)$ be a simple graph on vertex set $I$ with edge set $E$. Given $S\subseteq I$, we write $g\vert_S$ for the induced graph $(S, E\cap \binom{S}{2})$. Given simple graphs $g, g'$ on vertex sets $S,T$, write $g\sqcup g'$ for the graph $(S\sqcup T, E\sqcup T')$. These operations afford $\bfG$ the structure of Hopf monoid, 
\begin{gather}
\label{eq:G-structure}
	\mu_{S,T}(g\otimes g') = g \sqcup g'
\qand
	\Delta_{S,T}(g) = g\vert_S \otimes g\vert_{T\,},
\end{gather}
which lifts Schmitt's Hopf algebra \cite[Prop. 3.4]{schmitt1993hopf} to the level of species.


\subsection{Linearized bimonoids}

Given a linearized species $\bfB= \Bbbk \rfB$, we call $\bfB$ a \demph{linearized monoid} (respectively, \demph{linearized comonoid}) if its structure maps are linearizations of set maps $\mu_{S,T}:\rfB[S]\times\rfB[T] \to \rfB[S\sqcup T]$ (resp., of set maps $\Delta_{S,T}:\rfB[S\sqcup T] \to \rfB[S]\times \rfB[T]$). (We may view the latter as a pair of functions $(\lambda, \rho)$, {\it i.e.}, $b\xmapsto{\Delta} \lambda_S(b) \times \rho_T(b)$ for all $b\in\rfB[S\sqcup T]$.) A bimonoid $\bfB$ is a \demph{linearized bimonoid} if both structure maps $\mu$ and $\Delta$ are linearized.

\begin{remark} Most bimonoids in $(\Spec,\sdot)$ are not linearized; but neither are they scarce:
\begin{enumerate}
\item 
If $\b$ and $\d$ are linearized bimonoids, then so are $\b\sdot \d$ and $\b\stimes \d$. 
\item \label{itm:lin-bimonoid}
If $\b$ is a cocommutative linearized bimonoid and $\p$ is a positive linearized comonoid, then $\b\scirc\p$ is a linearized bimonoid ({\it cf.} \eqref{eq:bop-product}, \eqref{eq:bop-coproduct}, \& Theorem~\ref{th:bop}).
\end{enumerate}
\end{remark}

Marberg \cite{marberg2015linearization,marberg2016duality} has gone a long way towards classifying the self-dual linearized bimonoids. 
(If $\b=\Bbbk\rfB$ is commutative and cocommutative and \emph{strongly linearized} \cite[Def. 4.2.1]{marberg2015linearization}, then there exists a positive linearized comonoid $\q$ such that $\b\cong \bfE\scirc\q$. He also has a characterization in terms of a partial order on the set species $\rfB$ in this case.)
Linearized bimonoids also feature prominently in \cite{white2020cohen} and \cite{white2025chromatic}, where White introduces, respectively, notions of \emph{Cohen--Macaulay} and \emph{chromatic polynomial} for (combinatorial) Hopf monoids in species.

Note the cocommutative hypothesis in Item \ref{itm:lin-bimonoid}. In \cite[Sec. 8.7]{aguiar2010monoidal} we learn that the category of cocommutative linearized comonoids is equivalent to the category of species with restrictions introduced by Schmitt \cite{schmitt1993hopf}. And, moreover, cocommutative linearized bimonoids are the same as monoids in Schmitt's category.\footnote{Or \emph{coherent exponential species with restrictions}, in Schmitt's terminology \cite{schmitt1993hopf}.} This point of view will be useful in what follows, so we provide a few details here. 

A set species $\rfB$ is a \demph{species with restrictions} if it comes with a bundle of morphisms $\rho^I_U:\rfB[I] \to \rfB[U]$, one for each inclusion $U \subseteq I$ of finite subsets (including $\emptyset \subseteq I$), satisfying certain compatibility conditions: $\rho_I^I = \id$; and $\rho_U^V\circ \rho_V^I = \rho_U^I$.

The restriction morphisms yield a (cocommutative, linearized) coproduct for $\b = \Bbbk\rfB$ by putting $\Delta_{S,T}(b) = \rho_S^I(b) \otimes \rho_T^I(b)$ for all $b\in\rfB[I]$, {\it cf.} \cite[Prop. 8.29]{aguiar2010monoidal}. We sometimes write $b\vert_S$ in place of $\rho_S^I(b)$ to simplify notation. If $\b$ is a linearized bimonoid with this coproduct, then the compatibility axiom translates to the following \demph{coherence property} at the level of set species: for all finite sets $U \subseteq I = S \sqcup T$, the following diagram commutes, {\it cf.} \cite[Prop. 8.31]{aguiar2010monoidal}.
\begin{equation}
\label{eq:rescriction-coherence}
\begin{tikzcd}[column sep=8em]
\rfB[S]\times \rfB[T] \arrow[d, "\mu_{S,T}"'] \arrow[r, "\rho^S_{S\cap U} \times \rho^T_{T\cap U}"] & 
	\rfB[S\cap U]\times \rfB[T\cap U] \arrow[d, "\mu_{S\cap U, T\cap U}"]
	\\
\rfB[I] \arrow[r, "\rho^I_{U}"'] & \rfB[U]
\end{tikzcd}
\end{equation}

The Hopf monoids $\bfL$, $\bfE$, and $\bfG$ are all cocommutative linearized bimonoids. 

\subsection{The functors $\cfT(\mhyphen)$ and $\cfS(\mhyphen)$, briefly}
\label{sec:two-functors}

Let $\p$ be a positive comonoid. In \cite[Ch. 11]{aguiar2010monoidal} one finds a Hopf monoid structure $\cfT(\p)$ on $\bfL\scirc\p$ satisfying the following universal property: $\p$ embeds in $\cfT(\p)_+$ via a comonoid map $\eta_{\p}$; and for all comonoid maps $\zeta:\p \to \h_+$ to (the positive part of) a Hopf monoid $\h$, there exists a Hopf monoid map $\hat\zeta:\cfT(\p)\to\h$ making the following diagram commute.
	\[
	\begin{tikzcd}
		\cfT(\p)_+ \arrow[rr, "\hat{\zeta}_{+}"]  &  & \h_+ \\
		& \p \arrow[ul, "\eta(\p)"] \arrow[ur, "\zeta"']& 
	\end{tikzcd}
	\]

The monoid structure is simple enough: if $X\vdash S$ and $Y\vdash T$, then $\mu_{S,T}$ maps the pair $l\otimes p_{(X)}$ and $l'\otimes p_{(Y)}$ to the element $(l\cdot l')\otimes p_{(X\sqcup Y)}$. We leave a careful description of the comonoid structure for Section \ref{sec:bop-bimonoid-proof}. (Or the reader may consult \cite[Sec. 11.2.4]{aguiar2010monoidal}, which our construction closely follows.) Roughly: given $l\otimes p_{(X)}$, split $p_{(X)}$ according to $\Delta^{\p}$ and restrict $l$ to the partitions of $S$ and $T$ that $X$ induces. 
Two examples in $\cfT(\bfG_+)$ with $X=\bigl\{\{d\},\{b\},\{a,c,e\}\bigr\}$ follow:
\[
(d|b|c{-}e{\,\ }a)\xmapsto{\Delta_{cde,ab}} (d|c{-}e) \otimes (b|a)
\ \ \qand\ \ 
(d|b|c{-}e{\,\ }a)\xmapsto{\Delta_{bed,ac}} (d|b|e) \otimes (c{\, \ }a).
\]

The Hopf monoid $\cfS(\p)$ built on $\bfE\scirc\p$ is defined similarly. 
It satisfies nearly the same universal property as $\cfT(\p)$, adding only the assumption that $\h$ is commutative. 


\section{The Hopf monoid $\tee{\b}{\p}$}

For fixed positive comonoid $\p$, the constructions $\cfT(\p)$ and $\cfS(\p)$ in \cite{aguiar2010monoidal} were made without explicit reference to species with restrictions. Instead, the proofs that they are Hopf monoids rely on the existence of certain colax-colax adjunctions. (This might lead one to imagine the (co, bi)monoid structures on $\bfL\scirc\p$ and $\bfE\scirc\p$ are very special. Not quite so.) In this section, we extend these constructions from $\bfL$ and $\bfE$ to any cocommutative linearized bimonoid $\b$. We let $\tee{\b}{\p}$ denote the resulting Hopf monoid structure on $\b\scirc\p$.

\subsection{A monoid and comonoid structure on $\b\scirc\p$}
\label{sec:bop-bimonoid-proof}

\mbox{}

\medskip\noindent
\underline{\emph{A monoid structure on $\b\scirc\p$.}} 
Suppose $\b$ is a monoid and $\p$ is positive. Given partitions $X\vdash S$ and $Y\vdash T$, and simple tensors $b\otimes p_{(X)} \in \b[X]\otimes \p(X)$ and $b'\otimes p_{(Y)} \in \b[Y]\otimes \p(Y)$, define the mapping $\mu_{S,T}$ via
\begin{gather}\label{eq:bop-product}
\bigl(b\otimes p_{(X)}\bigr) \otimes \bigl(b'\otimes p_{(Y)}\bigr) \ \xmapsto{\mu_{S,T}} \ \mu_{X,Y}^{\b}(b\otimes b') \otimes p_{(X\sqcup Y)}\,.
\end{gather}
%
%

\begin{figure}[htb]
\centering
    \begin{tikzpicture}[scale=.80, 
    		vert/.style={draw,fill,circle,inner sep=0,minimum height=5pt},
			purp/.style={color=purple!30!white,inner sep=.75pt}
			]
    
      \draw[style=purp,rounded corners=10,thick] (-.3,0) rectangle (3.1,4);
      \draw[style=purp,rounded corners=10,thick] (3.9,0) rectangle (7.3,4);
    
      \draw[style=purp,line width=1.25pt,rounded corners=1pt] (-.3,2) -- (1.4,4) -- (2.98,.1);
      \draw[style=purp,line width=1.25pt,rounded corners=1pt] (3.9,2) -- (5.8,4) -- (5.2,0) -- (7.3,2);
  \node[style=purp,above] () at (3.5,4) {$S \ \ \ T$};
    
    
      \node[style=vert, label=above left:{\small$a$}] (a) at (.4,3.3) {};
      \node[style=vert, label=above:{\ \ \small$d|c$}] (dc) at (2.3,2.7) {};
      \node[style=vert, label=below:{\small$y|b|h|j$}] (ybhj) at (1,1.45) {};

      \node[style=vert, label=above right:{\small$f$}] (f) at (6.2,2.5) {};
      \node[style=vert, label=left:{\small$e$}] (e) at (4.55,3.4) {};
      \node[style=vert, label=below left:{\small$k|i$}] (ki) at (4.8,1.1) {};
      \node[style=vert, label=below:{\small$x|m$}] (xm) at (6.7,.75) {};
     
      \draw[very thick] (a) -- (ybhj) -- (dc);
      \draw[very thick] (xm) -- (e);
      \draw[very thick] (f) -- (ki) -- (e) -- (f);

	\node (X) at (3.5,2) {$\otimes$};
    \end{tikzpicture}
\ \raisebox{9ex}{$\displaystyle \xmapsto{\displaystyle\,\,\mu_{S,T}\,\,}$} \ 
    \begin{tikzpicture}[scale=.85, 
    	vert/.style={draw,fill,circle,inner sep=0,minimum height=5pt},
    	purp/.style={color=purple!30!white,inner sep=.75pt}
    	]
    
      \draw[style=purp,line width=1.25pt,rounded corners=1pt] (0,2) -- (1.7,4) -- (3.5,0);
      \draw[style=purp,line width=1.25pt,rounded corners=1pt] (3.5,2) -- (5.8,4) -- (4.8,0) -- (7,2);

	\draw[style=purp,line width=1.25pt] (3.5,0) -- (3.5,4);
    \node[style=purp,above] () at (3.5,4.1) {$S \sqcup T$};

      \draw[style=purp,rounded corners=10,line width=.75pt]
         (0,0) rectangle (7,4);
    
      \node[style=vert, label=above left:{\small$a$}] (a) at (.7,3.3) {};
      \node[style=vert, label=above:{\ \ \small$d|c$}] (dc) at (2.7,2.7) {};
      \node[style=vert, label=below:{\small$y|b|h|j$}] (ybhj) at (1.5,1.45) {};

      \node[style=vert, label=above right:{\small$f$}] (f) at (5.9,2.5) {};
      \node[style=vert, label=left:{\small$e$}] (e) at (4.35,3.4) {};
      \node[style=vert, label=below left:{\small$k|i$}] (ki) at (4.3,1.1) {};
      \node[style=vert, label=below:{\small$x|m$}] (xm) at (6.4,.75) {};
     
      \draw[very thick] (a) -- (ybhj) -- (dc);
      \draw[very thick] (xm) -- (e);
      \draw[very thick] (f) -- (ki) -- (e) -- (f);
    \end{tikzpicture}
\caption{The product on $\bfG\scirc\bfL_+$.}
\label{fig:GoL-product}
\end{figure}

\begin{proof}[Proof of monoid structure]
Checking associativity is straightforward, following from the associativity of $\mu^{\b}$ and of set union. (Given $I = R\sqcup S \sqcup T$, $X\vdash R$, $Y\vdash S$, and $Z\vdash T$. Beginning with the simple tensor $b_X\otimes p_{(X)}\otimes b_Y\otimes p_{(T)}\otimes b_Z\otimes p_{(Z)}$, and taking either route around the square in \eqref{eq:associative}, we get
$
(b_X \cdot b_Y \cdot b_Z)\otimes p_{(X\sqcup Y \sqcup Z)}
$.)

Again, we omit all (co)unit checks as they are trivial in our context.
Conclude that $(\b\scirc\p_+, \mu)$ is a monoid. 
\end{proof}

\medskip\noindent
\underline{\emph{A comonoid structure on $\b\scirc\p$.}}
Suppose $\b$ is a connected cocommutative linearized comonoid and $\p$ is a positive comonoid. We need some notation before defining the coproduct for $\b\scirc\p$. Given a partition $X=\{X_1,X_2,\ldots\}$ of $I$, and a nonempty subset $T \subseteq I$, write
\begin{gather}
\label{eq:partition-restriction}
X^T:=\{X_i \in X \mid X_i \cap T \neq\emptyset\}
\qand
X_T:=\{X_i\cap T \in X \mid X_i \cap T \neq\emptyset\}.
\end{gather}
See Figure \ref{fig:restriction-notation} (together with Fig. \ref{fig:GoL-example}) for an example.
\begin{figure}[htb]
\centering
\begin{minipage}[b]{.42\textwidth}{%
\begin{align*}
X &
	=\{a \ / \ b,c\ / \ d,e\ / \ y\ / \ h,i\ / \ j,k,f\ / \ x,m\} \quad\mbox{\ }
& 
T &= \{c,e,i,k,f,x,m\}
\\
X^T &
	= \{\,b,c \ / \ d,e \ / \ h,i \ / \ j,k,f \ / \ x,m \,\} 
&
X_T &
	= \{\,c \ / \ e \ / \ i \ / \ k,f \ / \ x,m\,\} \\
\hphantom{Y}
\end{align*}
}\end{minipage}
\vskip-.2in

\begin{tikzpicture}[scale=.85, 
	vert/.style={draw,fill,circle,inner sep=0,minimum height=5pt},
	purp/.style={color=purple!40!white,inner sep=.75pt}
	]

  \draw[style=purp,line width=.75pt,rounded corners=.5pt] (0,2.2) -- (1.8,4) -- (5.5,2.02); 
  \draw[style=purp,line width=.75pt,rounded corners=.5pt] (0,0.5) -- (1.5,1.98) -- (5,0) -- (7,1.5); 

  \draw[style=purp,rounded corners=10,line width=.75pt]
     (0,0) rectangle (7,4);
  \draw[style=purp,line width=1.25pt] (0,2) -- (7,2);

  \node[style=vert, label=above right:{\small$c|b$}] (cb) at (2,2.5) {};
  \node[style=vert, label=above right:{\small$e|d$}] (ed) at (4.1,3.3) {};
  \node[style=vert, label=below:{\small$h|i$}] (hi) at (2.2,.75) {};
  \node[style=vert, label=above right:{\small$j|f|k$}] (jkf) at (4.8,1.1) {};
  \node[style=vert, label={[label distance=-0.05cm]0:{\small$m|x$}}] (xm) at (6.0,.4) {};
 
  \draw[very thick] (xm) -- (jkf);
  \draw[very thick] (hi) -- (ed) -- (jkf) -- (hi);

\node (subcaption) at (3.5,-.6) {$g\vert_{X^{T}}\otimes l_{(X^{T})} \in \bfG[X^T]\otimes \bfL_+(X^{T})$};
\end{tikzpicture}
\quad
\begin{tikzpicture}[scale=.85, 
	vert/.style={draw,fill,circle,inner sep=0,minimum height=5pt},
	purp/.style={color=purple!40!white,inner sep=.75pt}
	]

  \draw[style=purp,line width=.75pt,rounded corners=.5pt] (0,2.2) -- (1.8,4) -- (5.5,2.02); 
  \draw[style=purp,line width=.75pt,rounded corners=.5pt] (0,0.5) -- (1.5,1.98) -- (5,0) -- (7,1.5); 

  \draw[style=purp,rounded corners=10,line width=.75pt]
     (0,0) rectangle (7,4);
  \draw[style=purp,line width=1.25pt] (0,2) -- (7,2);

  \node[style=vert, label=above right:{\small$c$}] (cb) at (2,2.5) {};
  \node[style=vert, label=above right:{\small$e$}] (ed) at (4.1,3.3) {};
  \node[style=vert, label=below:{\small$h|i$}] (i) at (2.2,.75) {};
  \node[style=vert, label=above right:{\small$f|k$}] (jkf) at (4.8,1.1) {};
  \node[style=vert, label={[label distance=-0.05cm]0:{\small$m|x$}}] (xm) at (6.0,.4) {};
 
  \draw[very thick] (xm) -- (jkf);
  \draw[very thick] (hi) -- (ed) -- (jkf) -- (hi);

\node (subcaption) at (3.5,-.6) {$\sigma^{X^{T}}_{X_{T}}\bigl(g\vert_{X^{T}}\bigr)\otimes l_{(X_{T})} \in (\bfG\scirc\bfL_+)[T]$};
\end{tikzpicture}
\caption{The notations $X^T$ and $X_T$ for $T\subseteq I$ and $X\vdash I$, and their use in the coproduct for $\bfG\scirc\bfL_+$. (See Fig. \ref{fig:GoL-example} for the graph $g$.)}
\label{fig:restriction-notation}
\end{figure}
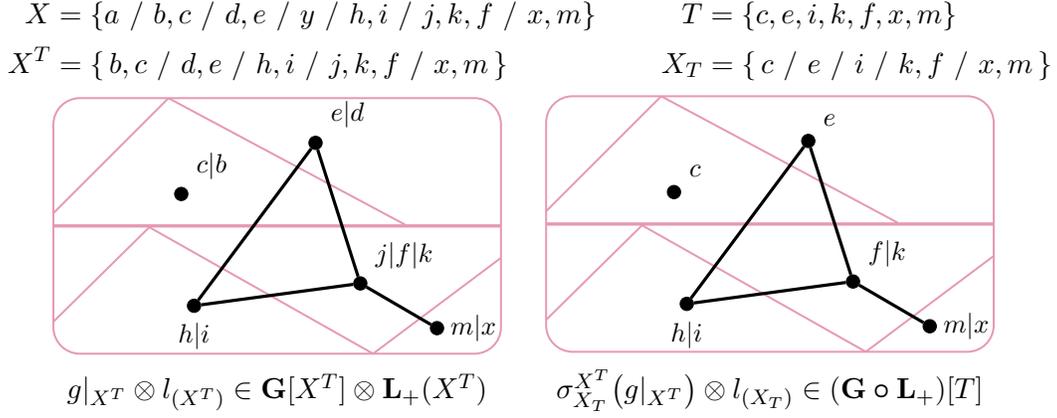
Given a decomposition $R \sqcup S \sqcup T = I$, it's plain to see that $(X^{S\sqcup T})^T = X^T$ and $(X_{S\sqcup T})_T = X_T$, and likewise for other combinations of $R,S,T$. To lighten notation, we sometimes write, {\it e.g.}, $X^{ST}$ in place of $X^{S\sqcup T}$.

Note that the sets $X^T$ and $X_T$ have the same cardinality---with the latter being a partition of $T$. A bijection: take each element $X_i \in X^T$ to the unique subset $Y_i \in X_T$ that it contains. Write $\sigma^{X^T}_{X_T}:\b[X^T] \xrightarrow{\,\simeq\,} \b[X_T]$ for the corresponding isomorphism provided by the species axioms; it will be useful farther below. To further lighten notation, we write $b\vert^{\downarrow}_{X_T}$ for the composition $\sigma^{X^T}_{X_T}\circ\rho^X_{X^T}(b)$ applied to some $b\in \b[X]$.

Next, suppose $X\vdash I$ and $I = S\sqcup T$. Build a decomposition for each block of $X$ via $X_i = S_i\sqcup T_i = (X_i \cap S)\sqcup (X_i \cap T)$. Note that $X_S$ (a partition of $S$, remember) is the union of the nonempty blocks $S_i$ just created, and likewise for $X_T$. Using the (positive) coproduct for $\p$, we define a map $\tilde\Delta_{(S,T)}$ from $\p(X)$ to $\p(X_S) \otimes \p(X_T)$ as follows. 

\noindent
\textbf{Step 1.} For each tensor factor $p_{X_i}$ of a given $p_{(X)} \in \p(X)$, perform the mapping
\[
	p_{X_i} \mapsto \begin{cases}
		p_{X_i} & \hbox{if }S_i=\emptyset \hbox{ or } T_i=\emptyset, \\
		\Delta^\p_{S_i,T_i}(p_{X_i}) & \hbox{otherwise.}
		\end{cases}
\]
Here, the images in the first case are viewed as belonging to $\p[S_i]$ or $\p[T_i]$, as appropriate; and in the second case as belonging to $\p[S_i]\otimes \p[T_i]$.

\smallskip
\noindent
\textbf{Step 2.} De-shuffle the resulting tensor factors so the $\p$-structures on the $S_i$ precede those on the $T_i$. The result is an element of $\p(X_S)\otimes \p(X_T)$, as needed. Using a modified Sweedler notation, we write that element as $\sum_{(p_{(X)})} p_{(X_S)}\otimes p_{(X_T)}$.

\medskip
We are ready to define the coproduct for $\b\scirc\p$.
%
Given $b\otimes p_{(X)} \in \b[X]\otimes \p(X)$, we write
\begin{gather}\label{eq:bop-coproduct}
    \Delta_{S,T}(b\otimes p_{(X)}) = \sum_{(p_{(X)})} \bigl(b\vert^{\downarrow}_{X_S} \otimes p_{(X_S)}\bigr) \otimes \bigl(b\vert^{\downarrow}_{X_T} \otimes p_{(X_T)}\bigr).
\end{gather}
%

\begin{figure}[htb]
\centering
    \begin{tikzpicture}[scale=.85, purp/.style={color=purple!80!black,inner sep=.75pt}]
    
      \draw[style=purp,line width=1.25pt,rounded corners=.5pt] (0,2.2) -- (1.8,4) -- (5.5,2.02); 
      \draw[style=purp,line width=1.25pt,rounded corners=.5pt] (0,0.5) -- (1.5,1.98) -- (5,0) -- (7,1.5); 
    
      \draw[style=purp,rounded corners=10,thick]
         (0,0) rectangle (7,4);
      \draw[style=purp,line width=1.25pt] (0,2) -- (7,2);
  \draw[thick] (3.5,0) -- (3.5,4.5);
  \node[above] () at (3.5,4) {$S \ \ \ T$};
    
    
      \node (a) at (.3,3.3) {\small$a$};
      \node (b) at (1.4,3) {\small$b$};
      \node (d) at (3.1,3.65) {\small$d$};
      \node (e) at (5.9,3.1) {\small$e$};
      \node (c) at (3.9,2.4) {\small$c$};
      \node (h) at (1.8,.5) {\small$h$};
      \node (f) at (5.1,1) {\small$f$};
      \node (i) at (3.9,.3) {\small$i$};
      \node (j) at (3.1,1.6) {\small$j$};
      \node (k) at (4.3,1.5) {\small$k$};
      \node (m) at (6.45,.6) {\small$m$};
      \node (x) at (6,.3) {\small$x$};
      \node (y) at (.4,1.4) {\small$y$};
    \end{tikzpicture}
\quad \ \ 
\raisebox{-0.6ex}{%
    \begin{tikzpicture}[scale=.85, 
    	vert/.style={draw,fill,circle,inner sep=0,minimum height=5pt},
    	purp/.style={color=purple!30!white,inner sep=.75pt}
    	]
    
      \draw[style=purp,line width=.75pt,rounded corners=.5pt] (0,2.2) -- (1.8,4) -- (5.5,2.02); 
      \draw[style=purp,line width=.75pt,rounded corners=.5pt] (0,0.5) -- (1.5,1.98) -- (5,0) -- (7,1.5); 
    
      \draw[style=purp,rounded corners=10,line width=.75pt]
         (0,0) rectangle (7,4);
      \draw[style=purp,line width=1.25pt] (0,2) -- (7,2);
    
      \node[style=vert, label=above:{\small$a$}] (a) at (.4,3.2) {};
      \node[style=vert, label=above right:{\small$c|b$}] (cb) at (2,2.5) {};
      \node[style=vert, label=above right:{\small$e|d$}] (ed) at (4.1,3.3) {};
      \node[style=vert, label=below left:{\small$y$}] (y) at (.55,1.5) {};
      \node[style=vert, label=below:{\small$h|i$}] (hi) at (2.2,.75) {};
      \node[style=vert, label=above right:{\small$j|f|k$}] (jfk) at (4.8,1.1) {};
	  \node[style=vert, label={[label distance=-0.05cm]0:{\small$m|x$}}] (xm) at (6.0,.4) {};
     
      \draw[very thick] (y) -- (a) -- (cb);
      \draw[very thick] (xm) -- (jfk);
      \draw[very thick] (hi) -- (ed) -- (jfk) -- (hi);
    \end{tikzpicture}
}

\hphantom{$\Delta_{S,T}$}\rotatebox{-90}{$\longmapsto$}\ \ \raisebox{-\height}{$\Delta_{S,T}$}

\medskip
\begin{tikzpicture}[scale=.80, 
	vert/.style={draw,fill,circle,inner sep=0,minimum height=5pt},
	purp/.style={color=purple!40!white,inner sep=.75pt}
	]

      \draw[style=purp,line width=.75pt,rounded corners=.5pt] (0,2.2) -- (1.8,4) -- (5.5,2.02); 
      \draw[style=purp,line width=.75pt,rounded corners=.5pt] (0,0.5) -- (1.5,1.98) -- (5,0) -- (7,1.5); 
    
      \draw[style=purp,rounded corners=10,line width=.75pt]
         (0,0) rectangle (7,4);
      \draw[style=purp,line width=1.25pt] (0,2) -- (7,2);
    
      \node[style=purp, right] (S) at (7,4) {\,\small$S$};
    
      \node[style=vert, label=above:{\small$a$}] (a) at (.4,3.2) {};
      \node[style=vert, label=above right:{\small$b$}] (fcb) at (2,2.5) {};
      \node[style=vert, label=above right:{\small$d$}] (ed) at (4.1,3.3) {};
      \node[style=vert, label=below left:{\small$y$}] (y) at (.6,1.55) {};
      \node[style=vert, label=below:{\small$h$}] (hi) at (2.2,.75) {};
      \node[style=vert, label=above right:{\small$j$}] (kj) at (4.8,1.1) {};
     
      \draw[very thick] (y) -- (a) -- (fcb);
      \draw[very thick] (hi) -- (ed) -- (kj) -- (hi);
\end{tikzpicture}
\raisebox{8.5ex}{$\otimes$}
\begin{tikzpicture}[scale=.80, 
	vert/.style={draw,fill,circle,inner sep=0,minimum height=5pt},
	purp/.style={color=purple!40!white,inner sep=.75pt}
	]

      \draw[style=purp,line width=.75pt,rounded corners=.5pt] (0,2.2) -- (1.8,4) -- (5.5,2.02); 
      \draw[style=purp,line width=.75pt,rounded corners=.5pt] (0,0.5) -- (1.5,1.98) -- (5,0) -- (7,1.5); 
    
      \draw[style=purp,rounded corners=10,line width=.75pt]
         (0,0) rectangle (7,4);

      \draw[style=purp,line width=1.25pt] (0,2) -- (7,2);
    
      \node[style=purp, left] (T) at (0,4) {\small$T$};
    
      \node[style=vert, label=above right:{\small$c$}] (fcb) at (2,2.5) {};
      \node[style=vert, label=above right:{\small$e$}] (ed) at (4.1,3.3) {};
      \node[style=vert, label=below:{\small$i$}] (hi) at (2.2,.75) {};
      \node[style=vert, label=above right:{\small$f|k$}] (kj) at (4.8,1.1) {};
	  \node[style=vert, label={[label distance=-0.05cm]0:{\small$m|x$}}] (xm) at (6.0,.4) {};
     
      \draw[very thick] (xm) -- (kj);
      \draw[very thick] (hi) -- (ed) -- (kj) -- (hi);
\end{tikzpicture}
\caption{The coproduct on $\bfG\scirc\bfL_+$.}
\label{fig:GoL-coproduct}
\end{figure}

\begin{proof}[Proof of comonoid structure]
Given $I = R\sqcup S \sqcup T$ and $X \vdash I$. Coassociativity of $\Delta^\p$ guarantees the same for $\tilde\Delta_{(S,T)}$. Specifically, the reader can verify that 
\[
	\tilde\Delta_{(R,S)} \otimes \id_{T} \circ \tilde\Delta_{(R\sqcup S,T)} 
	=
	\id_R \otimes \tilde\Delta_{(S,T)} \circ \tilde\Delta_{(R,S\sqcup T)}. 
\]
The behavior of the $\b$ factors in \eqref{eq:coassociative} is more subtle. (Figure \ref{fig:bop-coassociativity} may offer a helpful illustration as we investigate coassociativity.)
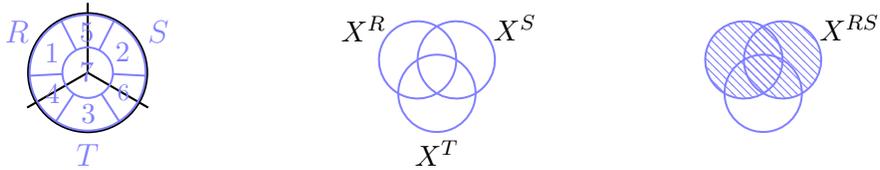
\begin{figure}[!htb]
\centering
\tikzset{every picture/.style={line width=0.75pt,scale=.75}} 
\begin{tikzpicture}[x=0.75pt,y=0.75pt,yscale=-1,xscale=1]

\draw[color=black,thick] (0,0) circle (40);
\draw[black,thick,rotate=-90] (0,0) -- (47,0); 
\draw[black,thick,rotate=30] (0,0) -- (47,0); 
\draw[black,thick,rotate=150] (0,0) -- (47,0); 

\draw[color=blue!50!white] (0,0) circle (17);
\draw[color=blue!50!white] (0,0) circle (39);
\draw[color=blue!50!white,rotate=-63,shift={(17,0)}] (0,0) -- (22,0); 
\draw[color=blue!50!white,rotate=-117,shift={(17,0)}] (0,0) -- (22,0); 
\draw[color=blue!50!white,rotate=3,shift={(17,0)}] (0,0) -- (22,0); 
\draw[color=blue!50!white,rotate=57,shift={(17,0)}] (0,0) -- (22,0); 
\draw[color=blue!50!white,rotate=123,shift={(17,0)}] (0,0) -- (22,0); 
\draw[color=blue!50!white,rotate=177,shift={(17,0)}] (0,0) -- (22,0); 

\draw[color=blue!50!white,rotate=-150,shift={(55,0)}] node at (0,0) {\large $R$};
\draw[color=blue!50!white,rotate=-30,shift={(55,0)}] node at (0,0) {\large $S$};
\draw[color=blue!50!white,rotate=90,shift={(55,0)}] node at (0,0) {\large $T$};

\draw[color=blue!50!white,rotate=-150,shift={(28,0)}] node (1) at (-.5,-.5) { $1$};
\draw[color=blue!50!white,rotate=-30,shift={(28,0)}] node (2) at (-.5,-.5) { $2$};
\draw[color=blue!50!white,rotate=90,shift={(28,0)}] node (3) at (-.5,-.5) { $3$};

\draw[color=blue!50!white,rotate=150,shift={(28,0)}] node (4) at (-.5,-.5) { $4$};
\draw[color=blue!50!white,rotate=-90,shift={(28,0)}] node (5) at (-.5,-.5) { $5$};
\draw[color=blue!50!white,rotate=30,shift={(28,0)},font=\footnotesize,] node (6) at (-.5,-.5) { $6$};
\draw[color=blue!50!white,] node (7) at (-.5,-.5) { $7$};
\end{tikzpicture}
\hskip.75in
\begin{tikzpicture}[x=0.75pt,y=0.75pt,yscale=-1,xscale=1]
\draw[color=blue!50!white,rotate=-30,shift={(15,0)}] (0,0) circle (26);
\draw[color=blue!50!white,rotate=-150,shift={(15,0)}] (0,0) circle (26);
\draw[color=blue!50!white,rotate=90,shift={(15,0)}] (0,0) circle (26);

\draw[color=black,rotate=-150,shift={(57,0)}] node at (0,0) {$X^R$};
\draw[color=black,rotate=-30,shift={(60,1)}] node at (0,0) {$X^S$};
\draw[color=black,rotate=90,shift={(55,0)}] node at (0,0) {$X^T$};
\end{tikzpicture}
\hskip.75in
\begin{tikzpicture}[x=0.75pt,y=0.75pt,yscale=-1,xscale=1]
\draw[color=blue!50!white,pattern=north west lines, pattern color=blue!50!white,rotate=-30,shift={(15,0)}] (0,0) circle (26);
\draw[color=blue!50!white,pattern=north west lines, pattern color=blue!50!white,rotate=-150,shift={(15,0)}] (0,0) circle (26);
\draw[color=blue!50!white,rotate=90,shift={(15,0)}] (0,0) circle (26);

\draw[color=black,rotate=-30,shift={(62,2)}] node at (0,0) {$\,\,X^{RS}$};
 \draw[color=black,rotate=90,shift={(55,0)}] node at (0,0) {\vphantom{$X^T$}};
\end{tikzpicture}
\caption{At left: a decomposition $I=R\sqcup S \sqcup T$ interacting with a partition $X\vdash I$ (into seven blocks). At center and right: illustrating the sets $X^R, X^{RS}$, etc.}
\label{fig:bop-coassociativity}
\end{figure}

Note that $X_{RS}$ is a partition of $R\sqcup S$; this affords us the notations $(X_{RS})^S$ and $(X_{RS})_S$, as in \eqref{eq:partition-restriction}. The hypotheses on $\b$ give us restrictions $\rho^X_{U}$ and $\rho^X_{V}$ that are morphisms in $(\Spec,\sdot)$. That is, we have
\begin{gather}
\label{eq:restriction-is-a-morphism}
	\rho^{X_{RS}}_{(X_{RS})^{S}}\sigma_{X_{RS}}^{X^{RS}} = 	
	\sigma_{(X_{RS})^S}^{(X^{RS})^S}\rho^{X^{RS}}_{(X^{RS})^S} =
	\sigma_{(X_{RS})^S}^{X^S}\rho^{X^{RS}}_{X^S};
\end{gather}
and likewise for other combinations of $R, S, T$. 

We temporarily relax the cocommutative assumption on $\b$, writing its comonoid structure as $b \mapsto \lambda^I_U(b) \otimes \rho^I_{V}(b)$. Proceeding counterclockwise in \eqref{eq:coassociative} and focusing on the $
\b$-factors yields
\begin{align*}
& \ 
\left[\bigl(\sigma_{(X_{RS})_R}^{(X_{RS})^R}\lambda^{X_{RS}}_{(X_{RS})^R} \otimes \sigma_{(X_{RS})_S}^{(X_{RS})^S}\rho^{X_{RS}}_{(X_{RS})^S}\bigr) \otimes \id_{X_T}\right]\circ\bigl(\sigma_{X_{RS}}^{X^{RS}}\lambda^X_{X^{RS}} \otimes \sigma_{X_{T}}^{X^{T}}\rho^X_{X^{T}}\bigr) \\
=& \ 
\left[\bigl(\sigma_{(X_{RS})_R}^{(X_{RS})^R}\sigma_{(X_{RS})^R}^{X^R}\lambda^{X^{RS}}_{X^R}\lambda^{X}_{X^{RS}} \otimes \sigma_{(X_{RS})_S}^{(X_{RS})^S}\sigma_{(X_{RS})^S}^{X^S}\rho^{X^{RS}}_{X^S}\lambda^{X}_{X^{RS}}\bigr) \otimes \id_{X_T}\right]\circ\bigl(\id_{X} \otimes \sigma_{X_{T}}^{X^{T}}\rho^X_{X^{T}}\bigr) \\
=& \ \sigma_{X_{R}}^{X^R}\lambda_{X^R}^{X} \otimes \sigma_{X_{S}}^{X^S}\rho^{X^{RS}}_{X^S}\lambda^{X}_{X^{RS}} \otimes \sigma_{X_{T}}^{X^{T}}\rho^X_{X^{T}}.
\intertext{In the last step, we use coassociativity of $\Delta^{\b}$ in the first tensor factor. The clockwise computation yields}
& \ 
\left[\id_{X_R}\otimes\bigl(\sigma_{(X_{ST})_S}^{(X_{ST})^S}\lambda^{X_{ST}}_{(X_{ST})^S} \otimes \sigma_{(X_{ST})_T}^{(X_{ST})^T}\rho^{X_{ST}}_{(X_{ST})^T}\bigr)\right]\circ\bigl(\sigma_{X_{R}}^{X^{R}}\lambda^X_{X^{R}} \otimes \sigma_{X_{ST}}^{X^{ST}}\rho^X_{X^{ST}}\bigr) \\
=& \ \sigma_{X_{R}}^{X^R}\lambda_{X^R}^{X} \otimes \sigma_{X_{S}}^{X^S}\lambda^{X^{ST}}_{X^S}\rho^{X}_{X^{ST}} \otimes \sigma_{X_{T}}^{X^{T}}\rho^X_{X^{T}}.
\end{align*}
We'd be done if the comonoid structure for $\b$ guaranteed that 
\begin{gather}
\label{eq:need-cocommutative}
	\rho^{X^{RS}}_{X^S}\lambda^{X}_{X^{RS}} = \lambda^{X^{ST}}_{X^S}\rho^{X}_{X^{ST}} \quad(\forall\,R\sqcup S\sqcup T=I).
\end{gather}
The cocommutative hypothesis on $\b$ provides that $\lambda^I_U = \rho^I_U$ for all $U\subseteq I$; this clearly suffices. (For then, each side of \eqref{eq:need-cocommutative} equals $\rho^X_{X^S}$.)

Conclude that $(\b\scirc \p, \Delta)$ is a comonoid.
\end{proof}

\subsubsection{A remark on the necessity of the cocommutative hypothesis}
In the preceding proof of coassociativity, it was sufficient to assume $\b$ is cocommutative. We now show this assumption is necessary (barring any additional assumptions on $\p$).

Consider the situation of Figure \ref{fig:bop-coassociativity}, with $X=\{X_1, \ldots, X_7\}$. Note $X^{RS}$ may be written as $X\setminus \{X_3\}$, or simply $\hat3$, and likewise $X^S$ is written as $\widehat{134}$. Then \eqref{eq:need-cocommutative} is rewritten as $\rho^{\hat3}_{\widehat{134}}\lambda^X_{\hat3} = \lambda^{\hat1}_{\widehat{134}}\rho^X_{\hat1}$. Both of these maps factor through the space $\b\bigl[\widehat{13}\bigr]$, as illustrated in Figure \ref{fig:bop-coassociativity-fail}. (The indicated maps commute since $\Delta^{\b}$ is coassociative.)
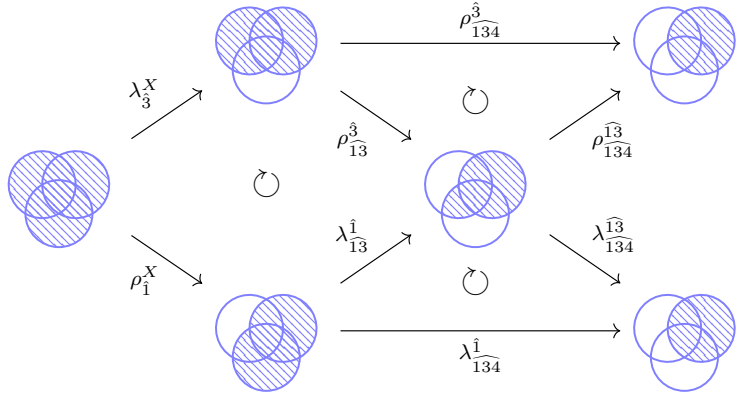
\begin{figure}[!hb]
\centering

\tikzset{every picture/.style={line width=0.75pt,x=0.75pt,y=0.75pt,yscale=-1,xscale=1,scale=.65},} 

\def\figone{%
\begin{tikzpicture}
\draw[color=blue!50!white,pattern=north west lines, pattern color=blue!50!white,rotate=-30,shift={(15,0)}] (0,0) circle (26);
\draw[color=blue!50!white,pattern=north west lines, pattern color=blue!50!white,rotate=-150,shift={(15,0)}] (0,0) circle (26);
\draw[color=blue!50!white,pattern=north west lines, pattern color=blue!50!white,rotate=90,shift={(15,0)}] (0,0) circle (26);

\draw[color=black,rotate=-150,shift={(54,0)}] node at (0,0) {};
\draw[color=black,rotate=-30,shift={(56,1)}] node at (0,0) {};
\draw[color=black,rotate=90,shift={(53,0)}] node at (0,0) {};
\end{tikzpicture}
}

\def\figtwo{%
\begin{tikzpicture}
\draw[color=blue!50!white,pattern=north west lines, pattern color=blue!50!white,rotate=-30,shift={(15,0)}] (0,0) circle (26);
\draw[color=blue!50!white,pattern=north west lines, pattern color=blue!50!white,rotate=-150,shift={(15,0)}] (0,0) circle (26);
\draw[color=blue!50!white,rotate=90,shift={(15,0)}] (0,0) circle (26);

\draw[color=black,rotate=-30,shift={(57,2)}] node at (0,0) {};
\draw[color=black,rotate=90,shift={(53,0)}] node at (0,0) {};
\end{tikzpicture}
}

\def\figthree{%
\begin{tikzpicture}
\draw[color=blue!50!white,pattern=north west lines, pattern color=blue!50!white,rotate=-30,shift={(15,0)}] (0,0) circle (26);
\draw[color=blue!50!white,rotate=-150,shift={(15,0)}] (0,0) circle (26);
\draw[color=blue!50!white,pattern=north west lines, pattern color=blue!50!white,rotate=90,shift={(15,0)}] (0,0) circle (26);

\draw[color=black,rotate=-30,shift={(57,2)}] node at (0,0) {};
\draw[color=black,rotate=90,shift={(53,0)}] node at (0,0) {};
\end{tikzpicture}
}

\def\figfour{%
\begin{tikzpicture}
\draw[color=blue!50!white,pattern=north west lines, pattern color=blue!50!white,rotate=-30,shift={(15,0)}] (0,0) circle (26);
\def\CR{
(-13,-8) circle (26)}
\def\CT{
(0,14.51666) circle (26)}
\begin{scope}
    \clip \CR;
    \fill[pattern=north west lines, pattern color=blue!50!white,] \CT;  
\end{scope}     
\draw[color=blue!50!white,rotate=-150,shift={(15,0)}] (0,0) circle (26);
\draw[color=blue!50!white,rotate=90,shift={(15,0)}] (0,0) circle (26);

\draw[color=black,rotate=-30,shift={(57,2)}] node at (0,0) {};
\draw[color=black,rotate=90,shift={(53,0)}] node at (0,0) {};
\end{tikzpicture}
}

\def\figfive{%
\begin{tikzpicture}
\draw[color=blue!50!white,pattern=north west lines, pattern color=blue!50!white,rotate=-30,shift={(15,0)}] (0,0) circle (26);
\draw[color=blue!50!white,rotate=-150,shift={(15,0)}] (0,0) circle (26);
\draw[color=blue!50!white,rotate=90,shift={(15,0)}] (0,0) circle (26);

\draw[color=black,rotate=-30,shift={(57,2)}] node at (0,0) {};
\draw[color=black,rotate=90,shift={(53,0)}] node at (0,0) {};
\end{tikzpicture}
}

\def\figsix{\figfive}

\begin{tikzcd}[ampersand replacement=\&,
cells={inner sep=+0pt, outer sep=1.0*width("$\;$")},
row sep=0em,column sep=3em,
]
    \& \figtwo \arrow[ddr, "\rho^{\hat3}_{\widehat{13}}"'] \arrow[rr, "\rho^{\hat3}_{\widehat{134}}"] \&\& \figfive \\[-2.5ex]
    \&\& {\hbox{\Large$\circlearrowright$}\ } \\[2.5ex]
  \figone \arrow[uur, "\lambda^X_{\hat3}"] \arrow[ddr, "\rho^X_{\hat1}"'] \& {\hbox{\Large$\circlearrowright$}\ } \& \figfour \arrow[uur, "\rho^{\widehat{13}}_{\widehat{134}}"'] \arrow[ddr, "\lambda^{\widehat{13}}_{\widehat{134}}"]  \\[.5ex]
    \&\& {\hbox{\Large$\circlearrowright$}\ } \\[-.5ex]
    \& \figthree \arrow[uur, "\lambda^{\hat1}_{\widehat{13}}"] \arrow[rr, "\lambda^{\hat1}_{\widehat{134}}"'] \&\& \figsix
\end{tikzcd}
\caption{A key step in the proof of coassociativity for $\b\scirc\p$, appealing where possible to the coassociativity axioms for $\b$.}
\label{fig:bop-coassociativity-fail}
\end{figure}
So we may compare 
\[
	\rho^{\hat3}_{\widehat{134}}\lambda^X_{\hat3} = \rho^{\widehat{13}}_{\widehat{134}}(\rho^{\hat3}_{\widehat{13}}\lambda^X_{\hat3}) 
	\qand
	\lambda^{\hat1}_{\widehat{134}}\rho^X_{\hat1} = \lambda^{\widehat{13}}_{\widehat{134}}(\lambda^{\hat1}_{\widehat{13}}\rho^X_{\hat1}) = \lambda^{\widehat{13}}_{\widehat{134}}(\rho^{\hat3}_{\widehat{13}}\lambda^X_{\hat3}),
\]
which must be equal for all $X\vdash I$, and $R\sqcup S\sqcup T=I$. Since $\lambda$ and $\rho$ are both surjective, {\it cf.} \cite{aguiar2010monoidal}, Cor.~8.38(i), we are asking that $\rho^{\widehat{13}}_{\widehat{134}}$ equals $\lambda^{\widehat{13}}_{\widehat{134}}$ on all of $\b\bigl[\widehat{13}\bigr]$, {\it i.e.}, that $\Delta^{\b}$ is cocommutative.

\subsection{Hopf monoid structure on $\b\scirc \p$}
\label{sec:bop-bimonoids}
We turn to the compatibility axioms.

\begin{theorem}\label{th:bop}
Let $\b$ be a connected cocommutative linearized bimonoid and $\p$ be a positive comonoid. Then the maps $\mu$ in \eqref{eq:bop-product} and $\Delta$ in \eqref{eq:bop-coproduct} afford $\b\scirc \p$ the structure of bimonoid in $(\Spec,\sdot)$.
\end{theorem}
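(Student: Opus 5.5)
Since the monoid and comonoid structures have already been established, the remaining task is the compatibility axiom \eqref{eq:compatible-delta}; the (co)unit conditions are trivial because $\b\scirc\p$ is connected. I will check the diagram on simple tensors. Fix decompositions $S\sqcup T = I = S'\sqcup T'$, and let $A,B,C,D$ be as in \eqref{eq:ABCD}. Take $b\otimes p_{(X)}$ with $X\vdash S$ and $b'\otimes p'_{(Y)}$ with $Y\vdash T$. The plan is to split each side into a $\p$-part and a $\b$-part and verify each part separately.

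\textbf{The $\p$-factors.} Going across the bottom of \eqref{eq:compatible-delta}, we first form $\mu^{\b}_{X,Y}(b\otimes b')\otimes p_{(X\sqcup Y)}$. Applying $\tilde\Delta_{(S',T')}$ to $p_{(X\sqcup Y)}$ then splits every block of $X$ along $A\sqcup B$ and every block of $Y$ along $C\sqcup D$. The key observation is that each block of $X\sqcup Y$ lies inside $S$ or inside $T$, and $(X\sqcup Y)_{S'} = X_A\sqcup Y_C$. Going around the top, $\tilde\Delta_{(A,B)}\otimes\tilde\Delta_{(C,D)}$ performs exactly the same blockwise splittings. Then $\tw$ followed by concatenation of blocks yields the same element of $\p(X_A\sqcup Y_C)\otimes\p(X_B\sqcup Y_D)$. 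This works because Step 2 of $\tilde\Delta$ is merely a de-shuffle of unordered tensor factors. Hence the Sweedler sums agree term by term, and it remains only to match the $\b$-factors.

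\textbf{The $\b$-factors.} This is the main step. I must show
\[
\bigl(\mu^{\b}_{X,Y}(b\otimes b')\bigr)\big\vert^{\downarrow}_{(X\sqcup Y)_{S'}} \;=\; \mu^{\b}_{X_A,Y_C}\bigl(b\vert^{\downarrow}_{X_A}\otimes b'\vert^{\downarrow}_{Y_C}\bigr),
\]
together with the analogous identity for $T'$. Using $(X\sqcup Y)^{S'} = X^{S'}\sqcup Y^{S'} = X^A\sqcup Y^C$, I apply the coherence square \eqref{eq:rescriction-coherence} for $\b$. The ground set is $X\sqcup Y$, decomposed as $X\sqcup Y$, with $U = X^A\sqcup Y^C$. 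This gives
\[
\rho^{X\sqcup Y}_{U}\,\mu^{\b}_{X,Y}(b\otimes b') = \mu^{\b}_{X^A,Y^C}\bigl(\rho^X_{X^A}b\otimes\rho^Y_{Y^C}b'\bigr).
\]
The relabeling bijection $\sigma^{U}_{X_A\sqcup Y_C}$ restricts to $\sigma^{X^A}_{X_A}$ on the first part and $\sigma^{Y^C}_{Y_C}$ on the second. Naturality of $\mu^{\b}$ with respect to bijections (the Remark following \eqref{eq:associative}) then moves $\sigma$ through $\mu^{\b}$, which yields the desired identity. The $T'$ side is identical. Cocommutativity enters here, through writing the coproduct of $\b$ purely via the restrictions $\rho$.

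The main subtlety I expect is bookkeeping in the degenerate cases, which I would dispose of before assembling the argument. These are: $A$ or $C$ empty, so that $X^A=\emptyset$ and connectedness of $\b$ is needed to handle $\b[\emptyset]=\Bbbk$; and blocks lying entirely on one side, the first case of Step 1. Once these are handled, combining the $\p$-part and $\b$-part computations shows that both routes around \eqref{eq:compatible-delta} produce the same sum. Therefore $\b\scirc\p$ is a bimonoid. Since it is connected, Remark~\ref{rem:hopf-emptyset} upgrades it to a Hopf monoid.
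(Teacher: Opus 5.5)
Your proposal is correct and follows essentially the same route as the paper. Like the paper, you reduce to the compatibility square and reconcile the $\p$-factors via $(X\sqcup Y)_{S'}=X_A\sqcup Y_C$ (the paper's $X_{S'}=X_A$, $Y_{S'}=Y_C$, and so on), and you reconcile the $\b$-factors by applying the coherence square \eqref{eq:rescriction-coherence} on the block sets and then moving the relabeling bijections $\sigma$ through $\mu^{\b}$ by naturality; your explicit handling of the degenerate cases ($A$ or $C$ empty, settled by connectedness) is a detail the paper leaves implicit.
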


We write \demph{$\tee{\b}{\p}$} for this construction and note that $\cfT(\mhyphen) = \tee{\bfL}{\mhyphen}$ and $\cfS(\mhyphen) = \tee{\bfE}{\mhyphen}$. 

\begin{proof}

\noindent We check compatibility of $\Delta$ and $\mu$.
Recall the shorthand, {\it e.g.}, $b\vert^{\downarrow}_{X_U}$ for $\sigma_{X_U}^{X^U}\rho^X_{X^U}(b)$. We also indicate $\mu^\b(b\otimes d)$ just by $bd$. Given $S\sqcup T=I$, define $S', T'$ and $A, B, C, D$ as in \eqref{eq:ABCD}.

Starting from simple tensors $b\otimes p \in \b[X]\otimes \p(X)\subseteq (\b\scirc\p)[S]$ and $d\otimes q \in \b[Y]\otimes \p(Y)\subseteq (\b\scirc\p)[T]$, we head clockwise and counterclockwise in \eqref{eq:compatible-delta}.
We get 
\begin{align}
\notag
b\otimes p \otimes d\otimes {}&q : 
\\
\notag
\xrightarrow{\Delta\otimes\Delta} \ \  &	
\biggl[\sum_{(p)} b\vert^{\downarrow}_{X_A}\otimes p_{(X_A)} \otimes b\vert^{\downarrow}_{X_B}\otimes p_{(X_B)}\biggr]
\otimes
\biggl[\sum_{(q)} d\vert^{\downarrow}_{Y_C}\otimes q_{(Y_C)} \otimes d\vert^{\downarrow}_{Y_D}\otimes q_{(Y_D)}\biggr] 
\\
\notag
\xrightarrow{\id\otimes\tw\otimes\id} \ \ & 
\sum_{(p,q)} \biggl[b\vert^{\downarrow}_{X_A}\otimes p_{(X_A)} \otimes d\vert^{\downarrow}_{Y_C}\otimes q_{(Y_C)}\biggr] \otimes \biggl[b\vert^{\downarrow}_{X_B}\otimes p_{(X_B)}\otimes d\vert^{\downarrow}_{Y_D}\otimes q_{(Y_D)}\biggr]
\\
\label{eq:bop-cw}
\xrightarrow{\mu\otimes\mu} \ \ & 
\sum_{(p,q)} \biggl[\bigl(b\vert^{\downarrow}_{X_A} d\vert^{\downarrow}_{Y_C}\bigr) \otimes \bigl(p_{(X_A)}\otimes q_{(Y_C)}\bigr)\biggr] \otimes \biggl[\bigl(b\vert^{\downarrow}_{X_B}  d\vert^{\downarrow}_{Y_D}\bigr)\otimes\bigl(p_{(X_B)}\otimes q_{(Y_D)}\bigr)\biggr]
\intertext{and (writing $XY$ for the partition $X\sqcup Y$ of $I$),}
\notag
b\otimes p \otimes d\otimes {} & q : \\
\notag
\xrightarrow{\,\,\mu\,\,} \quad&	bd\otimes(p\otimes q) 
\\
\label{eq:bop-ccw}
\xrightarrow{\,\,\Delta\,\,} \quad& 
\sum_{(p)(q)}
\biggl[(bd)\vert^{\downarrow}_{XY_{S'}} \otimes 
\bigl(p_{(X_{S'})}\otimes q_{(Y_{S'})}\bigr)\biggr] \otimes
\biggl[(bd)\vert^{\downarrow}_{XY_{T'}} \otimes \bigl(p_{(X_{T'})}\otimes q_{(Y_{T'})}\bigr)\biggr],
\end{align}
respectively. We now reconcile \eqref{eq:bop-cw} and \eqref{eq:bop-ccw}. 

\medskip\noindent\emph{Reconciling the factors in $\p$.} This is straightforward. Simply observe that: 
\[
	X_{S'} = X_A, \hbox{ and } Y_{S'} = Y_C; 
	\qand 
	X_{T'} = X_B, \hbox{ and } Y_{T'} = Y_D.
\]

\medskip\noindent\emph{Reconciling the factors in $\b$.} We focus on the left tensor-factors, decompressing notation a bit. We have
\begin{align*}
(bd)\vert^{\downarrow}_{XY_{S'}} &= \sigma_{XY_{S'}}^{XY^{S'}}\rho_{XY^{S'}}^{XY} \circ \mu_{X,Y}(b\otimes d) = \sigma_{XY_{S'}}^{XY^{S'}} \bigl(\mu_{X^{S'},Y^{S'}} \circ \rho_{X^{S'}}^{X} \otimes \rho_{Y^{S'}}^{Y}\bigr) (b\otimes d)
\intertext{and}
\bigl(b\vert^{\downarrow}_{X_A} d\vert^{\downarrow}_{Y_C}\bigr)
&= \mu_{X_A,Y_C} \circ \bigl(\sigma_{X_A}^{X^A}\rho_{X^A}^X \otimes \sigma_{Y_C}^{Y^C} \rho_{Y^C}^{Y}\bigr)(b\otimes d) 
\\
&= \bigl(\mu_{X_A,Y_C} \sigma_{X_A}^{X^A} \otimes \sigma_{Y_C}^{Y^C}\bigr) \circ \rho_{X^A}^X \otimes \rho_{Y^C}^{Y}(b\otimes d)
\\
&= \bigl(\sigma_{XY_{AC}}^{XY^{AC}}\mu_{X^A,Y^C}\bigr) \circ \rho_{X^A}^X \otimes \rho_{Y^C}^{Y}(b\otimes d)
\\
&= \sigma_{XY_{S'}}^{XY^{S'}}\bigl(\mu_{X^{S'},Y^{S'}} \circ \rho_{X^{S'}}^X \otimes \rho_{Y^{S'}}^{Y}\bigr)(b\otimes d).
\end{align*}
In the first calculation, we appeal to the coherence property of $\rho$ and $\mu$; in the second, to the fact that $\mu$ is a morphism in $(\Spec,\sdot)$. 

Reconciling the right tensor-factors proceeds similarly, and so the defining digram commutes.
Conclude that $\tee{\b}{\p}$ is a bimonoid in $(\Spec, \sdot)$. 
\end{proof}


\begin{corollary}
If $\b$ is a finite cocommutative linearized Hopf monoid in $(\Spec, \sdot)$ and $\p$ is a positive comonoid, then $\tee{\b}{\p}$ is a Hopf monoid in $(\Spec,\sdot)$.
\end{corollary}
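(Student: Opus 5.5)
The plan is to deduce the corollary from Theorem~\ref{th:bop} together with Remark~\ref{rem:hopf-emptyset}(a). Theorem~\ref{th:bop} already gives a bimonoid structure on $\b\scirc\p$. What remains is convolution-invertibility of $\id$, and by \cite[Prop.~8.10]{aguiar2010monoidal} this reduces to checking that the component $(\b\scirc\p)[\emptyset]$ is a Hopf algebra.

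First, I would identify the empty component. By Definition~\ref{def:operations}, $(\b\scirc\p_+)[\emptyset]:=\b[\emptyset]$, and $\p$ is positive, so $\b\scirc\p=\b\scirc\p_+$. Next, I would read off the structure maps $\mu_{\emptyset,\emptyset}$ and $\Delta_{\emptyset,\emptyset}$ from \eqref{eq:bop-product} and \eqref{eq:bop-coproduct}. With $X=Y=\emptyset$ (the empty partition of $\emptyset$), the product is simply $\mu^{\b}_{\emptyset,\emptyset}$ with an empty tensor of $\p$-factors. In the coproduct, $X_S=X_T=\emptyset$, the map $\tilde\Delta$ is the identity on the empty tensor, and $b\vert^{\downarrow}_{\emptyset}=\rho^\emptyset_\emptyset(b)=b$. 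Since $\b$ is cocommutative, its coproduct at $\emptyset$ is $b\mapsto\rho^\emptyset_\emptyset(b)\otimes\rho^\emptyset_\emptyset(b)=b\otimes b$ on basis elements, so the coproduct of $\b\scirc\p$ at $\emptyset$ is $\Delta^{\b}_{\emptyset,\emptyset}$. Hence $(\b\scirc\p)[\emptyset]\cong\b[\emptyset]$ as bialgebras.

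Since $\b$ is a Hopf monoid, $\b[\emptyset]$ is a Hopf algebra, again by \cite[Prop.~8.10]{aguiar2010monoidal}. The isomorphism above then makes $(\b\scirc\p)[\emptyset]$ a Hopf algebra, and Remark~\ref{rem:hopf-emptyset}(a) yields that $\tee{\b}{\p}$ is a Hopf monoid.

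The main obstacle is bookkeeping: Theorem~\ref{th:bop} is stated for connected $\b$, while the corollary only assumes a (finite) Hopf monoid, whose $\b[\emptyset]$ might be a nontrivial Hopf algebra such as a group algebra. The first thing I would check is whether the proof of Theorem~\ref{th:bop} used connectedness only to dispose of (co)unit checks. If so, those checks hold verbatim at $\emptyset$, because the structure there is exactly that of $\b[\emptyset]$. In the connected case the conclusion is immediate from Remark~\ref{rem:hopf-emptyset}(b). I expect the finiteness hypothesis to play no role beyond ensuring the antipode formula makes sense componentwise.
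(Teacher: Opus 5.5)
Your proposal is correct and follows the paper's proof. Both arguments take the bimonoid structure from Theorem~\ref{th:bop}, reduce via \cite[Prop.~8.10]{aguiar2010monoidal} to the empty component, and identify $(\b\scirc\p)[\emptyset]\cong\b[\emptyset]$; the paper uses its standing connectedness convention to write $\b[\emptyset]=\Bbbk$, whereas you also check that the identification respects the bialgebra structure, so the Hopf hypothesis on $\b$ would handle a nonconnected $\b[\emptyset]$ too.
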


\begin{proof}
We have verified that $\tee{\b}{\p}$ is a bimonoid. In lieu of verifying the antipode axioms, we appeal to \cite[Prop. 8.10]{aguiar2010monoidal}: a bimonoid $\h$ is a Hopf monoid if and only if $\h[\emptyset]$ is a Hopf algebra. Observe that $(\b\scirc\p)[\emptyset]\cong\b[\emptyset]=\Bbbk$ to complete the proof.
\end{proof}

\begin{example}
\label{sec:basic-examples}
A description of the bimonoid structure for our (new) running example $\tee{\bfG}{\bfL_+}$ is given in Figures \ref{fig:GoL-product} and \ref{fig:GoL-coproduct}. We highlight here an old example. (See also Example \ref{ex:endofunctions}.)

\smallskip
In \cite[\S13.4.2]{aguiar2010monoidal}, the authors give linearized Hopf monoid structures $\mathbf{O}_1$ and $\mathbf{P}_1$ on the species of preposets and posets, respectively. In Section 13.1.6, {\it ibid.}, they remark that $\mathbf{O}_1 = \mathbf{P}_1\scirc \bfE_+$ as species. It is a simple matter to check that $\mathbf{O}_1 \cong \tee{\mathbf{P}_1}{\bfE_+}$. We leave a careful description of the isomorphism to the interested reader.
\end{example}

\section{Structure of $\tee{\b}{\p}$}

See \cite[Ch. 15]{aguiar2010monoidal} for definitions of functors $\K, \Kbar, \Kvee$ from species to graded vector spaces. (Being \emph{bilax}, they take connected Hopf monoids to Hopf algebras.)

\subsection{Basic results}

In \cite[App.~B]{aguiar2010monoidal}, we learn that $(\Spec_+, \scirc)$ is a monoidal category---meaning, among other things, that $(\b\scirc\p)\scirc\q \cong \b\scirc(\p\scirc\q)$ for positive species $\p{,}\q$. One might ask \emph{how do the constructions $\tee{\b\scirc\p}{\q}$ and $\tee{\b}{\p\scirc\q}$ compare?} The reader will be happy with the answer.


\begin{proposition} Let $\b$ be a cocommutative linearized Hopf monoid, and let $\p{,}\q$ be positive comonoids. If $\p$ is a cocommutative linearized comonoid, then $\tee{\tee{\b}{\p}}{\q} \cong \tee{\b}{\p\scirc\q}$.
\end{proposition}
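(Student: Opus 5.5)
The plan is to write down the canonical species isomorphism $\Phi:(\b\scirc\p)\scirc\q\to\b\scirc(\p\scirc\q)$ from \cite[App.~B]{aguiar2010monoidal} and then check that it intertwines the products \eqref{eq:bop-product} and the coproducts \eqref{eq:bop-coproduct}.

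\textbf{Both sides are well defined.} On the left, $\b\scirc\p$ must be a cocommutative linearized bimonoid for $\tee{\tee{\b}{\p}}{\q}$ to make sense.
\begin{itemize}
\item Since $\p=\Bbbk\rfp$ is linearized and cocommutative, with restrictions $p\mapsto p\vert_U$, the formula \eqref{eq:bop-coproduct} sends a basis element to a single tensor.
\item Its two tensor factors are both obtained by restricting to a subset (namely $X^U$ for $\b$ and the blocks $U\cap X_i$ for $\p$). So $\Delta_{S,T}$ is $\rho_S\otimes\rho_T$ for a family of restriction maps, which makes it linearized and cocommutative.
\end{itemize}
This is the content of Remark item (2), and Theorem~\ref{th:bop} supplies the bimonoid structure. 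On the right, $\p\scirc\q$ must be a positive comonoid. For this I reuse the proof of the comonoid structure in Section~\ref{sec:bop-bimonoid-proof} with $\p$ in place of $\b$. That proof only used that $\b$ is a cocommutative linearized comonoid; connectedness and the product played no role.

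\textbf{The map $\Phi$.} A basis element of the left side on $I$ is given by four pieces of data:
\begin{itemize}
\item a partition $Z\vdash I$;
\item a partition $X=\{X_1,\dots,X_k\}\vdash Z$;
\item $b\in\rfB[X]$ and $p_{X_i}\in\rfp[X_i]$;
\item $q_{(Z)}\in\q(Z)$.
\end{itemize}
Put $W_i=\bigcup_{A\in X_i}A$, so $W=\{W_i\}\vdash I$, and each $X_i$ is a partition of $W_i$. Then $\Phi$ sends this element to
\[
\sigma^X_W(b)\otimes\bigotimes_i\bigl(p_{X_i}\otimes q_{(X_i)}\bigr),
\]
and it is the standard bijection. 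Compatibility with products is immediate: both products concatenate partitions, apply $\mu^\b$ to the $\b$-parts, and leave the $\p$- and $\q$-parts unchanged. Naturality in bijections is routine.

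\textbf{Coproducts.} Fix $I=S\sqcup T$ and track the $T$-factor; the $S$-factor is symmetric.
\begin{itemize}
\item \emph{Left side.} The $\q$-parts are split by $\tilde\Delta$ over the blocks of $Z$, giving $q_{(Z_T)}$. The $(\b\scirc\p)$-part is restricted to $Z^T$ and relabeled to $Z_T$. By the description of $\b\scirc\p$ as a species with restrictions, this restricts $b$ to the blocks $X_i$ that contain some $A\in Z^T$, and restricts each such $p_{X_i}$ to $X_i\cap Z^T$, relabeled to $(X_i)_{W_i\cap T}$.
\item \emph{Right side.} The $\b$-part is restricted to $W^T$ and relabeled to $W_T$. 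Each $p_{X_i}\otimes q_{(X_i)}$ is split by the $\p\scirc\q$ coproduct: $p_{X_i}$ is restricted to $(X_i)^{W_i\cap T}$ and relabeled, and the $\q$-parts are split along $W_i\cap T$.
\end{itemize}
Three identities then match the two sides.
\begin{itemize}
\item A block $X_i$ meets $T$ (i.e. $W_i\in W^T$) exactly when it contains a block of $Z^T$. Hence the index set for $b$ is the same, $X^{Z^T}\leftrightarrow W^T$.
\item $(X_i)^{W_i\cap T}=X_i\cap Z^T$, so the $\p$-restrictions agree.
\item $\bigsqcup_i (X_i)_{W_i\cap T}=Z_T$, so the $\q$-splittings agree factor by factor, up to the de-shuffle of Step~2.
\end{itemize}

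The main obstacle is bookkeeping the relabeling isomorphisms $\sigma$. I will need the analogue of \eqref{eq:restriction-is-a-morphism}, namely that restriction commutes with transport, to show that $\sigma^{X}_{W}$ followed by $\rho^W_{W^T}$ and relabeling to $W_T$ equals relabeling $X^{Z^T}\to Z_T$-induced blocks followed by $\sigma$. I would handle this by proving a single naturality lemma first: for a bijection of partitions induced by "merging blocks," the restriction maps of $\b$ and $\p$ commute with the $\sigma$'s. Once that lemma is in place, the coproduct check reduces to the set-theoretic identities above.
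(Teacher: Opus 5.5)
Your proposal is correct and follows essentially the same route as the paper. The paper also uses the canonical species isomorphism that identifies each block-partition $X_i$ with its merged block $W_i$, treats the product check as immediate, and matches coproducts by expanding the restriction-type coproducts of $\b$, $\p$, and $\b\scirc\p$, together with the naturality of restriction under relabeling (the identity the paper invokes ``in the spirit of'' \eqref{eq:restriction-is-a-morphism}). You are somewhat more explicit than the paper about why both sides are defined, namely that $\b\scirc\p$ is cocommutative linearized and that $\p\scirc\q$ carries the restriction-type comonoid structure. The paper leaves both points implicit.
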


We establish some notation before beginning the proof.
In the running notation, a simple tensor in $(\b\scirc\p)\scirc\q$ takes the form
\begin{gather}
\label{eq:(bp)q}	
(bp)_X\otimes q_{(X)}
	= (b_{\mathcal X}\otimes p_{(\mathcal X)})\otimes 
	q_{(X)}
	= (b_{\mathcal X}\otimes_{\mathcal A_i\in \mathcal X} p_{\mathcal A_i})\otimes_{ij} q_{A_{ij}}.
\end{gather}
Here $\mathcal X \vdash X \vdash I$. Let us write $X = \{A_1, A_2, \ldots\}$ and $\mathcal X = \{\mathcal A_1, \ldots, \}$, with each $\mathcal A_i$ a collection of some blocks $A_{ij}\in X$ (after relabelling said blocks).

By contrast, a simple tensor in $\b\scirc(\p\scirc\q)$ takes the form
\begin{gather}
\label{eq:b(pq)}
	b_{Y}\otimes (pq)_{(Y)} 
	= b_{Y}\otimes_{B_i \in Y} (p_{\mathcal B_i}\otimes q_{(\mathcal B_i)})
	= b_{Y}\otimes_{B_i\in Y} (p_{\mathcal B_i} \otimes_{j} q_{B_{ij}}).
\end{gather}
Here, we write $Y = \{B_1, \dots\}\vdash I$ and $\mathcal B_i \vdash B_i$, with $\mathcal B_i = \{B_{i1}, B_{i2},\ldots\}$.

\begin{proof}

The species isomorphism $\varphi:(\b\scirc\p)\scirc\q \to \b\scirc(\p\scirc\q)$ is built from the correspondence between the partitions $\mathcal A_i$ in $\mathcal X$ and the blocks $B_i$ in $Y$. Specifically, $\varphi$ uses the natural bijection $\b[\mathcal X] \xrightarrow{\tau} \b[Y]$ and shuffles the $\p$ and $\q$ components as appropriate. (Identify $\mathcal A_i$ with $\mathcal B_i$ and $A_{ij}$ with $B_{ij}$.) We must check that it correctly transports the monoid and comonoid structures. The former check is straightforward and omitted.

For the comonoid check, we make essential use of the fact that $\b, \p$, and $\b\scirc \p$ have their coproduct defined in terms of restrictions. 
Consider an element $(bp)_X \in (\b\scirc\p)[X]$. Given $\mathcal X\vdash X\vdash I$ and $\mathcal A_i \vdash B_i$ as above, and $S\subseteq I$, note that the compact expression $(bp)_X\vert_{X^S}$ takes the expanded form $b\vert_{\mathcal X^{X^S}} \otimes_i p_i\vert_{\mathcal A_i^{S_i}}$. (Here, $b\in \b[\mathcal X]$,  $p_i\in\p[\mathcal A_i]$, and $S_i = B_i \cap S$.)

So, writing the coproduct on $q\in\q[A_{ij}]$ as $\sum_{(q_{A_{ij}})} q_{A_{ij}'}\otimes q_{A_{ij}''}$, we have in $(\b\scirc\p)\scirc\q$:
\[
\Delta_{S,T} \bigl((bp)_X\otimes q_{(X)}\bigr)
	= \sum_{ij}\sum_{(q_{A_{ij}})}\left[(bp)\vert^\downarrow_{X_{X_S}}\otimes_{ij} q_{A_{ij}'}\right]
	\otimes
	\left[(bp)^\downarrow_{X_{X_T}} \otimes_{ij} q_{A_{ij}''}\right],
\]
with $(bp)_X\vert^\downarrow_{X_S} = (b\vert_{\mathcal X^{X^S}})^\downarrow_{\mathcal X_{X_S}} \otimes_i (p_i\vert_{{A_i}^{S_i}})^\downarrow_{{\mathcal A_i}_{S_i}}$, as mentioned above. Likewise for $(bp)_X\vert^\downarrow_{X_T}$.

As we apply $\varphi^{\otimes2}$ to the term
\begin{gather*}
\label{eq:bpq-proof-step}
\left[(b\vert_{\mathcal X^{X^S}})^\downarrow_{\mathcal X_{X_S}} \otimes_i (p_i\vert_{{A_i}^{S_i}})^\downarrow_{{\mathcal A_i}_{S_i}} \otimes_{ij}q_{A_{ij}'}\right]
\otimes
\left[(b\vert_{\mathcal X^{X^T}})^\downarrow_{\mathcal X_{X_T}} \otimes_i (p_i\vert_{{A_i}^{T_i}})^\downarrow_{{\mathcal A_i}_{T_i}} \otimes_{ij}q_{A_{ij}''}\right],
\end{gather*}
we note that identities in the spirit of \eqref{eq:restriction-is-a-morphism} come into play; for example, one has
\[
    \tau\left(\bigl(b\vert_{{\mathcal X}^{X^S}}\bigr)^{\downarrow}_{\mathcal X_{X_S}}\right) 
    = 
    \left(\tau(b)\vert_{Y^S}\right)^{\downarrow}_{Y_S}.
\]
The final result is indeed yield the corresponding term of $\Delta_{S,T}\circ\varphi\bigl((bp)_X\otimes q_{(X)}\bigr)$.
\end{proof}

\begin{example}
\label{ex:endofunctions}
Consider the species of endofunctions \cite[Sec. 1.4]{bergeron1998combinatorial}. This is naturally $\bfE\scirc(\cyc\scirc\a)$, where $\cyc$ and $\a$ are the positive species of cycles and rooted trees, respectively. 
(View $\cyc\scirc\a$ as the species of 
\emph{connected} endofunctions.) Alternatively, it is $(\bfE\scirc\cyc)\scirc\a = \bij\scirc\a$, where $\bij$ is the species of bijections \cite[Ex. 11.16]{aguiar2010monoidal}. 

The preceding proposition tells us $\tee{\bfE}{\cyc\scirc\a} \cong \tee{\bij}{\a}$, whenever $\bij=\cfS(\cyc)$ holds a cocommutative linearized comonoid structure on $\cyc$. This gives a new realization of the Hopf algebra of endofunctions from \cite[Sec. 2]{hivert2008commutative}, as $\K(\tee{\bfE}{\cyc\scirc\a})$. (Both $\cyc$ and $\a$ are given the trivial coproduct there, so $\cyc$ is linearized cocommutative.)

In \cite[Ex. 2.10]{marberg2015linearization} one finds another structure on $\bij$, starting from a nontrivial linearized cocommutative comonoid structure on $\cyc$. This gives the new isomorphism $\K(\tee{\bfE}{\cyc\scirc\a}) \cong \K(\tee{\bij}{\a})$ added interest. 
\end{example}

\begin{proposition} 
\label{th:bop-to-doq}
Suppose $\tau:\b \to \d$ is the linearization of a morphism of species with restrictions and moreover is a morphism of cocommutative linearized bimonoids. Suppose $\theta: \p \to \q$ is a morphism of positive comonoids. Then the mapping $f_{\tau,\theta}:\tee{\b}{\p} \to \tee{\d}{\q}$ given on simple tensors by
\[
b_X \otimes 
\bigotimes_{X_i \in X} p_{X_i} \ \mapsto \ \tau(b_X) \otimes 
\bigotimes_{X_i \in X} \theta(p_{X_i})
\]
is a morphism of Hopf monoids.
\end{proposition}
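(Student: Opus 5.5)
We must check that $f=f_{\tau,\theta}$ is a morphism of species and that it commutes with the products \eqref{eq:bop-product} and the coproducts \eqref{eq:bop-coproduct}. Units and counits are automatic here, since everything is connected and $f$ is the identity in degree $\emptyset$. Compatibility with antipodes comes for free, because a bimonoid morphism between Hopf monoids preserves antipodes \cite{aguiar2010monoidal}. Naturality of $f$ in bijections $\sigma:I\to J$ holds because $\tau$ and $\theta$ are natural. One point needs care: $\tau_X$ is applied with $X$ regarded as an abstract finite set of blocks, and naturality of $\tau$ is exactly what makes the result independent of the labelling of blocks. In particular, $\tau$ commutes with the relabelling isomorphisms $\sigma^{X^T}_{X_T}$.

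\emph{Products.} Take $b\otimes p_{(X)}$ over $S$ and $b'\otimes p'_{(Y)}$ over $T$. The product followed by $f$ gives $\tau(\mu^{\b}_{X,Y}(b\otimes b'))\otimes\theta(p)_{(X)}\otimes\theta(p')_{(Y)}$. Applying $f$ first and then multiplying gives $\mu^{\d}_{X,Y}(\tau b\otimes\tau b')\otimes\theta(p)_{(X\sqcup Y)}$. These agree because $\tau$ is a monoid morphism.

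\emph{Coproducts.} This is the main step. Fix $X\vdash I=S\sqcup T$. We separate the $\b$-part and the $\p$-part of \eqref{eq:bop-coproduct}.

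For the $\p$-part, the map $\tilde\Delta_{(S,T)}$ is assembled blockwise from $\Delta^{\p}_{S_i,T_i}$ (or the identity when a block lies entirely in $S$ or $T$), followed by a deshuffle. Since $\theta$ is a comonoid morphism, $\Delta^{\q}_{S_i,T_i}\theta=(\theta\otimes\theta)\Delta^{\p}_{S_i,T_i}$. The deshuffle is natural, so it commutes with $\theta^{\otimes}$. Hence $\tilde\Delta^{\q}_{(S,T)}\circ\theta^{\otimes X}=(\theta^{\otimes X_S}\otimes\theta^{\otimes X_T})\circ\tilde\Delta^{\p}_{(S,T)}$.

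For the $\b$-part, we need
\[
\tau(b)\vert^{\downarrow}_{X_S}=\tau\bigl(b\vert^{\downarrow}_{X_S}\bigr),
\]
and likewise for $T$, for all $b\in\b[X]$. This is where the hypothesis that $\tau$ linearizes a morphism of species with restrictions is used. Such a morphism satisfies $\tau\circ\rho^X_{X^S}=\rho^X_{X^S}\circ\tau$, and naturality gives $\tau\circ\sigma^{X^S}_{X_S}=\sigma^{X^S}_{X_S}\circ\tau$. Composing these two identities gives the display.

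The hypothesis is needed, and not just that $\tau$ is a comonoid morphism. Knowing only $\Delta^{\d}\tau=(\tau\otimes\tau)\Delta^{\b}$ for the full coproduct $b\mapsto b\vert_{X^S}\otimes b\vert_{X^T}$ does not give the two restrictions separately. The reason is that $X^S$ and $X^T$ overlap and do not form a decomposition of $X$. So the restriction-morphism hypothesis must be invoked here.

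Combining the $\b$-part and the $\p$-part, the two sides of $\Delta\circ f=(f\otimes f)\circ\Delta$ agree term by term in the Sweedler sum. Together with the product check, this makes $f_{\tau,\theta}$ a morphism of bimonoids, and hence of Hopf monoids.
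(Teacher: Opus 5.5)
Your proposal is correct and follows the paper's proof. Products reduce to $\tau$ being a monoid map. For coproducts, the $\p$-part is handled by $\theta$ being a comonoid map, and the $\b$-part hinges on the identity $\tau(b)\vert^{\downarrow}_{X_S}=\tau\bigl(b\vert^{\downarrow}_{X_S}\bigr)$, which comes from $\tau$ intertwining restrictions and the relabelling isomorphisms; this is exactly the key step the paper singles out.

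One small caveat concerns your side remark on necessity, which the paper also makes. Because $\tau$ is assumed linearized (basis to basis), being a comonoid map already forces $\tau(b)\vert_U=\tau(b\vert_U)$ for every $U\subseteq X$: compare the pure basis tensors in $\Delta^{\b}_{U,X\setminus U}$. So the restriction hypothesis is only genuinely extra for non-linearized $\tau$, and this does not affect your argument.
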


The \demph{abelianization map} $\pi_\p:\cfT(\p) \to \cfS(\p)$ of \cite[Sec. 11.6.2]{aguiar2010monoidal} is the special case of this result when $\tau:\bfL \to \bfE$ forgets the linear order and $\theta:\p\to\p$ is the identity map. This gives, {\it e.g.}, the well-known Hopf algebra maps $\nsym \to \sym$ and $\ncqsym^* \to \ncsym^*$ under the Fock functors $\Kbar$ and $\Kvee$, with $\p=\bfE_+$.

Another example comes from $\tee{\bfG}{\bfL_+}$ mapping to various other species, {\it e.g.}, $\tee{\bfG}{\bfL_+} \to \tee{\bfE}{\bfE_+}$. Here $\tau: \bfG \to \bfE$ sends a graph in $\rfG[I]$ to the set partition induced of $I$ induced by its connected components; and $\theta$ is the map called ``$\tau$'' in the previous paragraph. We give a final example after the proof.

\begin{proof}
We show that $f_{\tau,\theta}$ respects coproducts. 
\begin{align*}
\Delta_{S,T}(f_{\tau,\theta})(b_X\otimes p_{(X)}) 
  &= \Delta_{S,T}(\tau(b_X)\otimes \theta(p_{(X)})) \\
  &= \sum_{(\theta(p_{(X)}))} \tau(b_X)\vert^\downarrow_{X_S} \otimes {\theta(p_{(X)})}_S \otimes
  \tau(b_X)\vert^\downarrow_{X_T} \otimes {\theta(p_{(X)})}_T
\\
  &= \sum_{(p_{(X)})} \tau(b_X)\vert^\downarrow_{X_S} \otimes \theta(p_{(X_S)}) \otimes
  \tau(b_X)\vert^\downarrow_{X_T} \otimes \theta(p_{(X_T)})
\\  
  &= \sum_{(p_{(X)})} \tau(b_X\vert^\downarrow_{X_S})\otimes \theta(p_{(X_S)}) \otimes
  \tau(b_X\vert^\downarrow_{X_T}) \otimes \theta(p_{(X_T)})
\\  
  &= f_{\tau,\theta}^{{}^{\,\otimes2}}\biggl(\sum_{(p_{(X)})} {b_X}\vert^\downarrow_{X_S}\otimes p_{(X_S)} \otimes
  {b_X}\vert^\downarrow_{X_T} \otimes p_{(X_T)}\biggr) \\
  &=f_{\tau,\theta}^{{}^{\,\otimes2}}\Delta_{S,T}(b_X\otimes p_{(X)}), 
\end{align*}
as needed. The key step above relies on $\tau$ intertwining with restrictions---it is not enough for $\tau$ to be a comonoid map because, recall, $a\mapsto a\vert^\downarrow_{X_S} \otimes a\vert^\downarrow_{X_T}$ is not a coproduct in $\b$. 

The straightforward proof that $f_{\tau,\theta}$ respects products is omitted.\end{proof}

\begin{example}
Continuing the discussion from Example \ref{ex:basic examples}, let $\mathbf{Q}_1$ denote the Hopf monoid of equivalence relations from \cite[\S13.4.3]{aguiar2010monoidal}. The authors explain that $\bfPi\cong \mathbf{Q}_1$ and argue that $\mathbf{Q}_1$ embeds in $\mathbf{O}_1$. The latter is now immediate from the former (viewing $\bfPi=\bfE\scirc\bfE_+$ and $\mathbf{O}_1 = \mathbf{P}_1\scirc\bfE_+$) using the map $f_{\tau,\theta}$ from the proposition: let $\tau$ be the inclusion $\alpha:\bfE \to \mathbf{P}_1$ taking $\ast_I\in\rfE[I]$ to the antichain in $\mathrm{P}[I]$; and let $\theta:\bfE_+ \to \bfE_+$ be the identity map.

We next describe bimonoid morphisms from $\mathbf{O}_1$ to $\mathbf{P}_1$. They will factor through the Hopf monoid $\tee{\mathbf{P}_1}{\mathbf{P}_{1,+}}$. Note that $\bfE$ embeds in $\mathbf{P}_1$ as a comonoid in another way (aside from $\alpha$ above):
\[
    \lambda:\bfE[I] \into \mathbf{P}[I]
    \quad\text{satisfying}\quad
    \ast_I \mapsto \frac{1}{|I|!}\sum_{\ell\in\rfL[I]} \ell.
\]
This is not a bimonoid morphism (nor a morphism of species with restrictions), but it does give us now two new instances of the proposition---specifically, $f_{\id,\theta}: \tee{\mathbf{P}_1}{\bfE_+} \to \tee{\mathbf{P}_1}{\mathbf{P}_{1,+}}$ with $\theta\in\{\alpha,\lambda\}$. 

Finally, we describe a forgetful map $\varphi:\tee{\mathbf{P}_1}{\mathbf{P}_{1,+}} \to \mathbf{P}_1$. Given a set partition $X\vdash I$, a poset $p_X\in\mathrm{P}[X]$, and posets $\bigl\{p_A\in\mathrm{P}[A]\bigr\}_{A\in X}$, we let $\varphi\bigl(p_X\otimes p_{(X)}\bigr)$ be the obvious poset in $\mathrm{P}[I]$ induced by this data. It is straightforward to check that $\varphi$ is a morphism of bimonoids. Composing $\varphi$ with the maps from the preceding paragraph, we get two bimonoid surjections $\mathbf{O}_1 \onto \mathbf{P}_1$ splitting the inclusion $\mathbf{P}_1 \into \mathbf{O}_1$. 
\end{example}

\subsection{Towards a universal property}
\label{sec:towards-a-up}

There does not seem to be a simple interpretation of the bifunctor $\tee{\mhyphen}{\mhyphen}$ as producing universal objects in $\bimon(\Spec,\sdot)$. Given a morphism $\zeta:\p\to\h_+$ of positive comonoids, we would like a morphism $\hat{\zeta}:\b\scirc\p\to\h$ of bimonoids. This amounts to needing a species morphism $\chi:\b\scirc\h_+ \to \h$ making the following diagrams commute
\begin{gather}
\label{eq:pre-operad}
\raisebox{.7\height}{\begin{tikzcd}[ampersand replacement=\&]
    (\b\scirc\h_+)\sdot(\b\scirc\h_+) \arrow[r, "\chi^{\sdot2}"]\arrow[d, "\mu^{(\b\scirc\h_+)}"'] \&  \h \sdot \h \arrow[d, "\mu^{\h}"]\\
    (\b\scirc\h_+) \arrow[r, "\chi"'] \&  \h
\end{tikzcd}}
\qquad
\raisebox{.7\height}{\begin{tikzcd}[ampersand replacement=\&]
    (\b\scirc\h_+) \arrow[r, "\chi^{\sdot2}"]\arrow[d, "\Delta^{(\b\scirc\h_+)}"'] \&  \h \arrow[d, "\Delta^{\h}"]\\
    (\b\scirc\h_+)\sdot(\b\scirc\h_+) \arrow[r, "\chi"'] \&  \h \sdot \h
\end{tikzcd}} \,.
\end{gather}

If $\b=\bfL$, this exists for any bimonoid $\h$; if $\b=\bfE$, this exists for any commutative bimonoid $\h$. Concretely for $\bfL$: if $X=\{B_1, \ldots, B_k\}\vdash I$, then $\chi$ takes $l_X\otimes_i h_{B_i}$ to the iterated product in $\h$ in the order prescribed by $l$. Checking \eqref{eq:pre-operad} is easy in these cases. We leave further analysis to future work, but highlight two parallels with $\cfT(\p)$.

\begin{proposition} 
\label{th:p-embeds}
Suppose $\dim \b[I] \geq 1$ for singleton sets $I$. Then the positive comonoid $\p$ embeds as a subcomonoid of $\tee{\b}{\p}_+$. \qed
\end{proposition}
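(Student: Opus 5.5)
The embedding will live in the summand of $(\b\scirc\p)[I]$ indexed by the one-block partition $X=\{I\}$. Since $\b$ is linearized, $\b[\{I\}]=\Bbbk\rfB[\{I\}]$. Because $\dim\b[\{x\}]\geq1$ for a singleton, $\rfB[\{x\}]$ is nonempty, so we may choose an element $e_{\{x\}}\in\rfB[\{x\}]$.

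To get a natural choice, I will fix one element $e_1\in\rfB[\{1\}]$ and transport it along the unique bijection $\{1\}\to\{I\}$ to obtain $e_{\{I\}}\in\rfB[\{I\}]$ for every nonempty $I$. This makes the assignment compatible with relabellings, since any bijection of singletons is unique. I then define
\[
\eta_I:\p[I]\to(\b\scirc\p)[I],\qquad p\mapsto e_{\{I\}}\otimes p\in\b[\{I\}]\otimes\p[I].
\]
This is a morphism of species and is injective, because $e_{\{I\}}\neq0$ and the summand indexed by $\{I\}$ is a direct summand.

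Next I will check that $\eta$ respects coproducts. Take $X=\{I\}$ and $I=S\sqcup T$ with both parts nonempty. Then $X^S=X^T=\{I\}$, $X_S=\{S\}$ and $X_T=\{T\}$. Steps 1 and 2 of $\tilde\Delta_{(S,T)}$ simply apply $\Delta^\p_{S,T}$ to the single factor. So \eqref{eq:bop-coproduct} gives
\[
\Delta_{S,T}(e_{\{I\}}\otimes p)=\sum_{(p)} \bigl(e_{\{I\}}\vert^\downarrow_{\{S\}}\otimes p_S\bigr)\otimes\bigl(e_{\{I\}}\vert^\downarrow_{\{T\}}\otimes p_T\bigr).
\]
The restriction $\rho^{\{I\}}_{\{I\}}$ is the identity, and $\sigma$ is the relabelling $\{I\}\to\{S\}$. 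Hence $e_{\{I\}}\vert^\downarrow_{\{S\}}=e_{\{S\}}$ by the naturality built into the choice of $e$, and likewise for $T$. This gives $\Delta_{S,T}\eta_I=(\eta_S\otimes\eta_T)\Delta^\p_{S,T}$.

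The only delicate point is the degenerate decompositions with $S$ or $T$ empty, where counit conventions enter. When $T=\emptyset$, the partition $X_T$ is empty and the $\b$-factor is the restriction of $e_{\{I\}}$ to the empty set, which is the unit of $\b[\emptyset]=\Bbbk$. This matches the counit of the positive comonoid $\p$ embedded in $\tee{\b}{\p}_+$, so these cases hold trivially, as in the paper's convention. Finally, the image is a subcomonoid because the summand $\b[\{I\}]\otimes\p[I]$ is closed under the computation above. That completes the plan.
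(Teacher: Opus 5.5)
Your proof is correct, and it is the evident argument the paper leaves to the reader (the proposition is stated without proof). You embed $\p[I]$ into the one-block summand $\b[\{I\}]\otimes\p[I]$ using a relabelling-compatible choice of $e_{\{I\}}$, exactly as $\eta_{\p}$ does for $\cfT(\p)$, and note that $X^S=X^T=\{I\}$ means the $\b$-factor is only relabelled. One small correction: a positive comonoid has no counit and no components $\Delta_{S,T}$ with $S$ or $T$ empty, so those cases need no check at all. Also, the image is a subcomonoid directly because $\Delta_{S,T}\eta_I=(\eta_S\otimes\eta_T)\Delta^{\p}_{S,T}$, not because the whole summand is closed under the coproduct.
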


\begin{proposition} 
\label{th:a-embeds}
Suppose $\dim \p[I] \geq 1$ for singleton sets $I$. Then the Hopf monoid $\b$ embeds as a Hopf submonoid of $\tee{\b}{\p}$. \qed
\end{proposition}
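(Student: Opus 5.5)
The plan is to realize $\b$ inside $\b\scirc\p$ as the span of those structures whose underlying partition is the partition of $I$ into singletons, with a fixed $\p$-structure on each singleton. Since $\dim\p[\{x\}]\geq1$ for every singleton, naturality lets me choose nonzero elements $e_{\{x\}}\in\p[\{x\}]$ compatible with bijections. Concretely, fix one nonzero $e\in\p[\{\ast\}]$ and transport it along the unique bijections $\{\ast\}\to\{x\}$; this is consistent because the only automorphism of a singleton is the identity. Write $\hat I=\{\{x\}\mid x\in I\}$. Define
\[
	\iota_I:\b[I]\to(\b\scirc\p)[I],\qquad b\mapsto \sigma^{I}_{\hat I}(b)\otimes \textstyle\bigotimes_{x\in I} e_{\{x\}},
\]
where $\sigma^I_{\hat I}:\b[I]\to\b[\hat I]$ is induced by the bijection $x\mapsto\{x\}$. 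It is injective because it lands in the direct summand indexed by $X=\hat I$, and the tensor factor $\bigotimes_x e_{\{x\}}$ is a nonzero simple tensor. It is also plainly natural in bijections $I\to J$.

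Next I check that $\iota$ is a monoid morphism. For $X=\hat S$ and $Y=\hat T$ we have $X\sqcup Y=\widehat{S\sqcup T}$, so by \eqref{eq:bop-product} the product of $\iota(b)$ and $\iota(b')$ is $\mu^{\b}_{\hat S,\hat T}(\sigma b\otimes\sigma b')\otimes\bigotimes e$. Because $\mu^\b$ is a species morphism, this equals $\sigma(\mu^\b_{S,T}(b\otimes b'))\otimes\bigotimes e=\iota(\mu(b\otimes b'))$. The unit is respected since $(\b\scirc\p)[\emptyset]=\b[\emptyset]$.

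For the comonoid check I use \eqref{eq:bop-coproduct} with $X=\hat I$. Every block $\{x\}$ lies entirely in $S$ or entirely in $T$, so Step 1 of $\tilde\Delta$ applies no $\Delta^\p$ and simply sorts the factors. Hence $\tilde\Delta_{(S,T)}(\bigotimes e)=\bigotimes_{x\in S}e\otimes\bigotimes_{x\in T}e$, with a single Sweedler term. Moreover $X^S=X_S=\hat S$, so $\sigma^{X^S}_{X_S}=\id$, and $b\vert^\downarrow_{X_S}=\rho^{\hat I}_{\hat S}(\sigma b)=\sigma(\rho^I_S b)$, again by naturality of restrictions. Therefore $\Delta_{S,T}(\iota b)=\iota(b\vert_S)\otimes\iota(b\vert_T)=(\iota\otimes\iota)\Delta^\b_{S,T}(b)$, where cocommutativity of $\b$ is what makes $\Delta^\b$ equal to the pair of restrictions. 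The counit is trivial by connectedness.

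Thus $\iota$ is an injective bimonoid morphism. A bimonoid map between Hopf monoids automatically commutes with antipodes, so its image is a Hopf submonoid isomorphic to $\b$. The only step needing care is the choice of the $e_{\{x\}}$, which must be compatible with relabelings so that $\iota$ is a morphism of species; the singleton-automorphism argument above handles this. Everything else reduces to the observation that singleton blocks never split under $\Delta^\p$.
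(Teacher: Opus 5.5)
Your argument is correct, and it is the routine verification the paper leaves to the reader (the proposition is stated without proof): decorate the all-singletons partition with a nonzero $e\in\p[\{\ast\}]$ transported along bijections, and observe that $\Delta^{\p}$ never splits a singleton block, so $\iota$ is an injective bimonoid map. Equivalently, your $\iota$ is the special case of Proposition~\ref{th:bop-to-doq} with $\tau=\id_{\b}$ and the injective map $\theta:\bfE_1\to\p$, $\ast_{\{x\}}\mapsto e_{\{x\}}$ (which is trivially a comonoid map), composed with the evident isomorphism $\b\cong\tee{\b}{\bfE_1}$.
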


%
%

\section{The Hopf monoid $\rspec{\tee{\b\d}{\p\q}}$}

Motivated by a desire to better understand the Hopf algebra $\rqsym$ of $r$-quasi\-symmetric functions, the authors in \cite{lauve202xinterpolation} introduce a notion of interpolation between Hopf monoids in species. Let $\theta:\p\to\q$ be any morphism of positive comonoids. Composing with the embedding $\eta_{\q}:\q \to \cfT(\q)$, we get a bimonoid map $\hat\theta:\cfT(\p)\to\cfT(\q)$ granted by the universal property of $\cfT(\p)$. The main result of \cite{lauve202xinterpolation} may be summarized as follows. There is a one-parameter family of Hopf monoids $\bigl\{\rc\bigr\}_{r\geq1}$ and Hopf maps 
$\port^r_s:\rc \to \sc$ 
interpolating between $\cfT(\p)$ and $\cfS(\q)$ and factoring the Hopf map $\pi_\theta := \pi_\q\circ\hat{\theta}$. We extend this result to our construction $\tee{\mhyphen}{\mhyphen}$.

\begin{definition}
Given $r\in\ZZ_{\geq0}$ and species $\b,\p$ as in the running notation, let $\upspec{({\b}\scirc{\p})}$ and $\downspec{({\b}\scirc{\p})}$ be the species $\b\scirc \p_{\geq r}$ and $\b\scirc \p_{< r}$, respectively. If $\d,\q$ are additional species, with $\q$ positive, put
\[
	\rspec{\tee{\b,\d}{\p{,}\q}} := \upspec{({\b}\scirc{\p})} \sdot \downspec{({\d}\scirc{\q})}.
\]
(The special cases $\b=\bfL, \d=\bfE$ yield the species $\rc$ mentioned above.)
\end{definition}

\begin{theorem}\label{th:interpolating} 
Given connected cocommutative linearized bimonoids $\b,\d$ and two positive comonoids $\p{,}\q$. Suppose $\d$ is commutative, $\tau:\b\to\d$ is a morphism of species with restrictions and a bimonoid map, and $\theta:\p\to\q$ is a comonoid map. Then $\bigl\{\rspec{\tee{\b,\d}{\p{,}\q}}\bigr\}_{r\geq1}$ is a one-parameter family of Hopf monoids interpolating between $\tee{\b}{\p}$ and $\tee{\d}{\q}$. In particular, there are bimonoid morphisms 
\[
	\tee{\b}{\p} \xrightarrow{\port_r} \rspec{\tee{\b,\d}{\p{,}\q}}, 
	\ \ \ 
	\rspec{\tee{\b,\d}{\p{,}\q}} \xrightarrow{\port_s^r} \rspec[s]{\tee{\b,\d}{\p{,}\q}}, 
	\ \ \ 
	\rspec[s]{\tee{\b,\d}{\p{,}\q}} \xrightarrow{\port^s} \tee{\d}{\q}, 
\]
factoring a map $\hat{f}_{\tau,\theta}$ from $\tee{\b}{\p}$ to $\tee{\d}{\q}$.
These morphisms are surjective if $\hat{f}_{\tau,\theta}$ is.
\end{theorem}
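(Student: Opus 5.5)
The plan follows the template of \cite{lauve202xinterpolation}. First we put a Hopf monoid structure on the species $\rspec{\tee{\b,\d}{\p,\q}} = \upspec{(\b\scirc\p)}\sdot\downspec{(\d\scirc\q)}$. Then we build the three maps and check that they compose to $\hat f_{\tau,\theta}$.

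\emph{Step 1: the structure.} We realize $\rspec{\tee{\b,\d}{\p,\q}}$ as a quotient bimonoid of $\tee{\b}{\p}$. Consider the composite
\[
\tee{\b}{\p}\xrightarrow{\Delta}\tee{\b}{\p}\sdot\tee{\b}{\p}\longrightarrow \upspec{(\b\scirc\p)}\sdot\tee{\d}{\q}.
\]
The second map is the projection onto $\b\scirc\p_{\geq r}$ on the left factor, and $f_{\tau,\theta}$ (Proposition~\ref{th:bop-to-doq}) followed by projection onto $\d\scirc\q_{<r}$ on the right factor. Concretely, $\port_r$ sends $b\otimes p_{(X)}$ to the following. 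Let $U$ be the union of the blocks of $X$ of size $\geq r$ and $V$ the union of the rest. The image is $\bigl(b|^{\downarrow}_{X_U}\otimes p_{(X_U)}\bigr)\otimes\bigl(\tau(b|^{\downarrow}_{X_V})\otimes\theta(p_{(X_V)})\bigr)$, with no $\Delta^\p$ splitting since blocks are not cut.

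We then define the product and coproduct on $\rspec{\tee{\b,\d}{\p,\q}}$ by transport, so that $\port_r$ becomes a bimonoid map. The product is
\[
(u\otimes v)(u'\otimes v') = uu'\otimes vv'.
\]
This uses commutativity of $\d$ so that the $\downspec{}$-factors can be commuted past $\upspec{}$-factors, exactly as for $\rc$. For the coproduct, we apply $\Delta$ of $\tee{\b}{\p}$ on the left factor and $\Delta$ of $\tee{\d}{\q}$ on the right. Blocks of size $\geq r$ that split into pieces of size $<r$ are then pushed across via $f_{\tau,\theta}$ and multiplied into the $\d$-factor. That $\tau$ commutes with restrictions guarantees these rules are well defined.

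Rather than verify the axioms directly, we prove the following. When $\hat f_{\tau,\theta}$ is surjective, $\port_r$ is surjective, and its kernel is a biideal. We show the kernel is a biideal using the coherence diagram \eqref{eq:rescriction-coherence} and the compatibility check of Theorem~\ref{th:bop}. In the general case, we verify the axioms by the same block-by-block reconciliation as in Theorem~\ref{th:bop}: the $\p,\q$ factors reconcile through $X_{S'}=X_A$ and its analogues, and the $\b,\d$ factors through coherence together with $\tau$ intertwining restrictions. The antipode comes for free from connectedness by \cite[Prop.~8.10]{aguiar2010monoidal}.

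\emph{Step 2: the maps.} For $r\geq s$, define $\port^r_s$ to keep blocks of size $\geq r$, and to send blocks of size in $[s,r)$, which lie in the $\d$-factor, back to\dots\ This has the wrong direction. The corrected choice is to index so that larger $r$ moves more blocks into the $\d$-factor. Then $\port^r_s$, for $s\geq r$, applies $f_{\tau,\theta}$ to the blocks with sizes in $[r,s)$ and multiplies them into the $\d$-part. The map $\port^s$ sends everything to $\tee{\d}{\q}$ by $f_{\tau,\theta}$ followed by $\mu^{\tee{\d}{\q}}$.

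The factorizations $\port^s\circ\port^r_s\circ\port_r=\hat f_{\tau,\theta}$ and $\port^s_t\circ\port^r_s=\port^r_t$ then reduce to two facts: multiplying restrictions recovers the original element (coherence, since blocks are unsplit), and $\d$ is commutative. The endpoints are as follows: $r=1$ gives $\tee{\b}{\p}$, because $\p$ is positive, and $r\to\infty$ recovers $\tee{\d}{\q}$ degreewise. Surjectivity of $\hat f_{\tau,\theta}$ forces surjectivity of the last factor and of the composite. The first factor is surjective since every element is in the image, using Proposition~\ref{th:a-embeds}-type lifting of the $\d$-part.

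\emph{Main obstacle.} The hard step is compatibility of the coproduct in Step 1 when a block $X_i$ of size $\geq r$ is cut by $S\sqcup T$ into pieces of size $<r$. Those pieces must migrate from the $\b$-side to the $\d$-side. Checking \eqref{eq:compatible-delta} then needs the identity $\tau(b|^{\downarrow}_{X_U})=\tau(b)|^{\downarrow}_{X_U}$ together with commutativity of $\d$ to reorder products. I would first handle this by showing that $\port_r$ intertwines coproducts, term by term in the modified Sweedler sum.
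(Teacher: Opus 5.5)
There is a genuine gap. It sits in the identity you invoke at the end of Step~2: ``multiplying restrictions recovers the original element.'' Coherence \eqref{eq:rescriction-coherence} gives the opposite composite: $(bb')\vert_S=b$ for $b\in\b[S]$, i.e.\ $\Delta\circ\mu=\id$. It does not give $d\vert_U\cdot d\vert_V=d$, and that fails for $\bfL$ and $\bfG$ even when no block is cut.

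Take $\b=\d=\bfG$, $\tau=\id$, $\p=\q=\bfE_+$, $\theta=\id$; all hypotheses hold. Let $X=\{\{a,b\},\{c\}\}$, let $g$ be the graph with an edge between the two blocks, and let $g_0$ be the edgeless one. Your $\port_2$ sends both to $(ab)\dototimes(c)$, because the edge between a large and a small block is discarded. But $\port_3(g)\neq\port_3(g_0)$. So no map $\port^2_3$ whatsoever satisfies $\port^2_3\circ\port_2=\port_3$. Your composite $\port^3\circ\port^r_3\circ\port_r$ also returns $g_0$ for $r=2$ and $g$ for $r=1$, so it does not factor a single map.

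The same example breaks Step~1. We have $g-g_0\in\ker\port_2$, yet for $S=\{a,c\}$, $T=\{b\}$,
\[
(\port_2\otimes\port_2)\,\Delta_{S,T}(g-g_0)=\bigl(1\dototimes(a{-}c)-1\dototimes(a\ \ c)\bigr)\otimes\bigl(1\dototimes(b)\bigr)\neq0 .
\]
Hence $\ker\port_2$ is not a coideal, and no coproduct on $\rspec[2]{\tee{\b,\d}{\p,\q}}$ makes $\port_2$ a comonoid map. This holds even though $f_{\tau,\theta}=\id$ is surjective. Your ``main obstacle'' paragraph locates the trouble correctly: pieces of a cut large block land next to old small blocks on the $\d$-side. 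But neither $\tau(b\vert^{\downarrow}_{X_U})=\tau(b)\vert^{\downarrow}_{X_U}$ nor commutativity of $\d$ resolves it. What is needed is $\tau(b)=\tau(b)\vert_U\cdot\tau(b)\vert_V$ for every splitting $U\sqcup V$ of the blocks.

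The paper's route differs in two ways. First, it does not transport structure along $\port_r$. It realizes $\rspec{\tee{\b,\d}{\p,\q}}$ as the quotient of the larger bimonoid $\teebar{\h}=\tee{\b}{\p}\sdot\tee{\d}{\downspec{\q}}$ by $\rspec{\!\mathcal I}$ (Lemma~\ref{th:coideal}), which always surjects onto the species. So it needs no surjectivity assumption and no separate ``general case.'' Second, it builds its maps from $\hat f_{\tau,\theta}(b\otimes p_{(X)})=\prod_i\tau(b)\vert_{X^i}\otimes\theta(p_{(X)})$ rather than $f_{\tau,\theta}$. That choice makes $\port_s=\port^r_s\circ\port_r$ follow from commutativity of $\d$.

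Be aware, however, that the paper's coideal step tacitly uses the same identity. It treats $b\vert^{\downarrow}_{X_S}\dototimes1-b\vert^{\downarrow}_{X'_S}\dototimes\tau(b)\vert^{\downarrow}_{X''_S}$ as an element of $\rspec{\!\mathcal I}$, although $X'_S$ may contain small blocks. In the example above with $r=2$, that element is $(a{-}c)\dototimes1-(a)\dototimes(c)\notin\rspec[2]{\!\mathcal I}$. Likewise $\hat f_{\tau,\theta}(g)=g_0$, while $(\hat f_{\tau,\theta}\otimes\hat f_{\tau,\theta})\Delta_{S,T}(g)$ keeps the edge $a{-}c$, so $\hat f_{\tau,\theta}$ is not a comonoid map there either.

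So the repair is an extra hypothesis, not a cleverer computation. Suppose $\mu^{\d}\circ\Delta^{\d}$ fixes the image of $\tau$, as for $\d=\bfE$ (the setting of \cite{lauve202xinterpolation} and of the paper's closing example). Then $f_{\tau,\theta}=\hat f_{\tau,\theta}$ and your plan works. It is best run through the quotient of $\teebar{\h}$ rather than by transport along $\port_r$.
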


The map $\hat{f}_{\tau,\theta}$ is related to the map from Proposition \ref{th:bop-to-doq}, but different. 

\begin{remark} 
A proof of this result cannot simply verify the interpolation axioms given in \cite[Sec. 3]{lauve202xinterpolation}. (Those axioms hold, but the complements $\upspec{(\b\scirc\p)}$ built there can be much larger than the species $\b\scirc\p_{\geq r}$ we consider here.)
\end{remark}

As in the main result of \cite{lauve202xinterpolation}, the proof begins by analyzing slightly larger Cauchy products. Put $\teebar{\h} := \tee{\b}{\p} \sdot \tee{\d}{\downspec{\q}}$, which is a bimonoid with ``coordinatewise'' product and coproduct, {\it cf.} \cite[Sec. 1.2.7]{aguiar2010monoidal}.
(In what follows, we use ``$\dototimes$'' to distinguish between factors from $\tee{\b}{\p}$ and $\tee{\d}{\downspec{\q}}$ in $\teebar{\h}$.) 

Lemma \ref{th:coideal} establishes $\rspec{\tee{\b,\d}{\p{,}\q}}$ as a (bimonoid) quotient of $\teebar{\h}$. 
Given a partition $X\vdash I$, call a block $B\in X$ \emph{large} (otherwise, \emph{small}) if $|B|\geq r$. Let $X = X'\sqcup X''$ be the decomposition of the partition $X$ into its large and small blocks, respectively.

\begin{lemma}\label{th:coideal} 
Let $\b=\Bbbk\rfB$, and let $\rspec{\!\mathcal I}$ be the subspace of $\teebar{\h}$ spanned by elements
\begin{gather*}
\label{eq:rI-spanners}
\mathscr{S} = \left\{ b\otimes p_{(X)}\dototimes d\otimes q_{(Y)} - b\vert_{X'}\otimes p_{(X')}
\dototimes
\tau(b\vert_{X''})d\otimes \theta(p_{(X'')})\otimes q_{(Y)}
 \ \mid \ b\in \rfB[X] \right\}
\end{gather*}
in $\teebar{\h}[I\sqcup J]$ where: $X\vdash I$ with $X''\neq \emptyset$; $Y\vdash J$; and $b \mapsto b\vert_{X'}\otimes b\vert_{X''}$ is the coproduct of $\b$. 
Then $\rspec{\!\mathcal I}$ is an ideal and the quotient $\teebar{\h}/\rspec{\!\mathcal I}$ is isomorphic to $\rspec{\tee{\b,\d}{\p{,}\q}}$ as species. Moreover, as $\d$ is commutative, $\rspec{\!\mathcal I}$ is also a coideal.
\end{lemma}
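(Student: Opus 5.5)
We will treat the three claims of Lemma~\ref{th:coideal} in order: (i) $\rspec{\!\mathcal I}$ is an ideal, (ii) the quotient is $\rspec{\tee{\b,\d}{\p{,}\q}}$ as species, and (iii) $\rspec{\!\mathcal I}$ is a coideal when $\d$ is commutative. Throughout, write $\pi$ for the linear map on $\teebar{\h}$ that sends a basis-type simple tensor $b\otimes p_{(X)}\dototimes d\otimes q_{(Y)}$ to $b\vert_{X'}\otimes p_{(X')}\dototimes \tau(b\vert_{X''})d\otimes \theta(p_{(X'')})\otimes q_{(Y)}$. Since $\tau$ and $\theta$ preserve cardinalities of blocks, the blocks of $X''$ remain small after transport, so the output lies in the $\tee{\d}{\downspec{\q}}$ factor. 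The spanning set $\mathscr S$ is then exactly $\{h-\pi(h)\}$. Moreover, $\pi$ is idempotent, and its image is the span of tensors with $X''=\emptyset$, which is precisely $\upspec{(\b\scirc\p)}\sdot\downspec{(\d\scirc\q)}$ up to relabelling the Cauchy decomposition. Here we use $b\vert_{X}=b$, the counit, and connectedness. So $\rspec{\!\mathcal I}=\ker\pi$ and $\teebar{\h}=\operatorname{im}\pi\oplus\ker\pi$, which gives (ii) immediately.

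For (i), it suffices to show $\pi(hk)=\pi(\pi(h)\pi(k))$, or more directly that $\pi(hk)=\pi(h)\pi(k)$ where the product on the right is computed in $\teebar{\h}$. The product of $h=b\otimes p_{(X)}\dototimes d\otimes q_{(Y)}$ and $k=b_1\otimes p_{(X_1)}\dototimes d_1\otimes q_{(Y_1)}$ is coordinatewise: $bb_1\otimes p_{(XX_1)}\dototimes dd_1\otimes q_{(YY_1)}$. The large/small splitting of $XX_1$ is $X'X_1'\sqcup X''X_1''$. By the coherence property \eqref{eq:rescriction-coherence}, $(bb_1)\vert_{X'X_1'}=b\vert_{X'}b_1\vert_{X_1'}$, and similarly for the small parts. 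Since $\tau$ is a bimonoid map, $\tau((bb_1)\vert_{X''X_1''})=\tau(b\vert_{X''})\tau(b_1\vert_{X_1''})$. Comparing $\pi(hk)$ with $\pi(h)\pi(k)$ then requires commuting $\tau(b_1\vert_{X_1''})$ past $d$ inside $\d$, which is where commutativity of $\d$ is used. Hence $h-\pi(h)$ times anything lies in $\ker\pi$, so $\rspec{\!\mathcal I}$ is a two-sided ideal.

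For (iii), we check $\Delta_{U,V}(h-\pi(h))\in \rspec{\!\mathcal I}\otimes\teebar{\h}+\teebar{\h}\otimes\rspec{\!\mathcal I}$, which again reduces to showing $(\pi\otimes\pi)\Delta=(\pi\otimes\pi)\Delta\pi$. Expand $\Delta$ on $\teebar{\h}$ coordinatewise using \eqref{eq:bop-coproduct}. The key observations are as follows.
\begin{itemize}
\item A block $B$ of $X$ that is small stays small after restriction, so $X''_U\subseteq (X_U)''$. However, a large block may become small, and this is the main obstacle.
\item For a block of the second kind, $\pi$ applied after $\Delta$ transports it into the $\d$ factor, while $\Delta$ applied after $\pi$ never saw it as small.
\end{itemize}
To reconcile the two sides, we use three facts: $(b\vert^\downarrow_{X_U})$ restricted to its newly small part equals the restriction of $b$ through $X^U$ (by transitivity of restrictions and \eqref{eq:restriction-is-a-morphism}); $\tau$ intertwines restrictions; and $\theta$ is a comonoid map, so $\theta$ commutes with the Step~1/Step~2 splitting $\tilde\Delta$. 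The resulting $\d$-factor on each side is then a product of the same pieces, namely $\tau$ of restrictions of $b$ times the restriction of $d$. Commutativity of $\d$ makes the differing orders agree. We would first verify this block by block, checking each of the three cases small$\to$small, large$\to$large, and large$\to$small, and then assemble the result using the coherence of $\mu^{\d}$ with restrictions.
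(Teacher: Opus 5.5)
Your treatment of (i) and (ii) through the idempotent $\pi$ is correct, and it is a cleaner route than the paper's, which uses an explicit ``add zero'' computation for the ideal and a leading-term argument for the quotient. You are also right that commutativity of $\d$ is already needed in (i), because $\pi(hk)=\pi(h)\pi(k)$ requires moving $\tau(b_1\vert_{X_1''})$ past $d$; the paper hides this by suppressing the $\d$-factors. Your reduction of (iii) to $(\pi\otimes\pi)\Delta=(\pi\otimes\pi)\Delta\pi$ is valid, since $\ker(\pi\otimes\pi)=\rspec{\!\mathcal I}\otimes\teebar{\h}+\teebar{\h}\otimes\rspec{\!\mathcal I}$.

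The gap is the final reconciliation in (iii), and it cannot be repaired. Fix $h=b\otimes p_{(X)}\dototimes d\otimes q_{(Y)}$ and a decomposition $U\sqcup V$. Let $Z_1$ be the blocks of $X''$ meeting $U$, and let $Z_2$ be the blocks $B\in X'$ with $0<|B\cap U|<r$. Suppress the transports $\sigma$.

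In $(\pi\otimes\pi)\Delta(h)$, the $\d$-part of the left tensor factor is $\tau\bigl(b\vert_{Z_1\sqcup Z_2}\bigr)\,d\vert^\downarrow_{Y_U}$, because $\pi$ acts once on all small blocks of $X_U$ together. In $(\pi\otimes\pi)\Delta\pi(h)$ it is $\tau\bigl(b\vert_{Z_2}\bigr)\,\tau\bigl(b\vert_{Z_1}\bigr)\,d\vert^\downarrow_{Y_U}$, because $Z_1$ was already split off by the first $\pi$. (Coherence in $\d$ and $\tau$ intertwining restrictions are used correctly here.)

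These are not ``the same pieces.'' Equality needs $\tau(c)=\tau(c\vert_{Z_1})\,\tau(c\vert_{Z_2})$ for $c=b\vert_{Z_1\sqcup Z_2}$: a restriction must equal the product of its restrictions after applying $\tau$. Commutativity of $\d$ does not give this, and it genuinely fails.

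Take $\b=\d=\bfG$, $\tau=\id$, $\p=\q=\bfE_+$, $\theta=\id$, and $r=2$. Let $X=\{\{x,y\},\{z\}\}$, with $b$ the graph having one edge joining the two blocks, $Y=\emptyset$, $d=1$, and $U=\{x,z\}$, $V=\{y\}$. Then
\[
(\pi\otimes\pi)\Delta_{U,V}\bigl(h-\pi(h)\bigr)=\bigl(1\dototimes(x{-}z)-1\dototimes(x\ \,z)\bigr)\otimes(1\dototimes y)\neq0,
\]
where the two terms differ as edge versus no edge on $\{x,z\}$. So $h-\pi(h)\in\rspec{\!\mathcal I}$, but $\Delta_{U,V}(h-\pi(h))\notin\rspec{\!\mathcal I}\otimes\teebar{\h}+\teebar{\h}\otimes\rspec{\!\mathcal I}$.

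Hence the coideal assertion is false under the stated hypotheses. Your argument does go through under an extra condition such as $\mu^{\d}_{A,B}\Delta^{\d}_{A,B}\tau=\tau$ for all $A\sqcup B$. This holds automatically for $\d=\bfE$, which covers the $\cfT\to\cfS$ case and the paper's closing example.

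The paper's own proof has the same blind spot. It treats $b\vert^\downarrow_{X_T}\dototimes1-b\vert^\downarrow_{X'_T}\dototimes\tau(b)\vert^\downarrow_{X''_T}$ as an element of $\rspec{\!\mathcal I}$, tacitly assuming every block of $X'_T$ is large. That is exactly the large$\to$small case you flagged, but then resolved incorrectly.
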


\begin{proof} In three parts.

\smallskip\noindent
\emph{Claim: $\rspec{\!\mathcal I}$ is an ideal.} 
Given any $h\in \teebar{\h}$ and any $x \in\Span{\mathscr{S}}$, we argue that $hx$ is again in $\Span{\mathscr{S}}$. It suffices to take $x\in\mathscr{S}$. The factors originating from $\p,\d,\q$ don't offer any complications, so we suppress them in the notation. That is, suppose below that $h \in \rfB[H]$ and $x=b\dototimes1 - b\vert_{X'}\dototimes\tau(b\vert_{X''})$ for some $b \in \rfB[X]$ with $X$ not small. Since $\b$ is a linearized bimonoid, we have from the coherence condition \eqref{eq:rescriction-coherence} that 
\[
	(h\cdot b\vert_{X'})\vert_{H'\sqcup X'}
	= h\vert_{H'}\cdot (b\vert_{X'})\vert_{X'} 
	= h\vert_{H'}\cdot b\vert_{X'}.
\]
Since $\b$ is also connected, we have $(b\vert_{X'})\vert_{X'\cap H''} = (b\vert_{X'})\vert_{\emptyset} = 1_{\b}$. 
We use these identities and the fact that $\tau$ is a bimonoid map, as follows:
\begin{align*}
hx&=hb\dototimes1 \ - \ hb\vert_{X'}\dototimes \tau(b\vert_{X''}) \\
&=hb\dototimes1 \ - \ h\vert_{H'}b\vert_{X'}\dototimes\tau(h\vert_{H''})\tau(b\vert_{X''}) \\
&\qquad\qquad \ \ + \ h\vert_{H'}b\vert_{X'}\dototimes\tau(h\vert_{H''})\tau(b\vert_{X''}) \ - \ hb\vert_{X'}\dototimes \tau(b\vert_{X''})
 \\
&=\Bigl(hb\dototimes1 - (hb)\vert_{(H\sqcup X)'} \dototimes \tau\left((hb)\vert_{(H\sqcup X)''}\right)\Bigr) \\
&\qquad \ \ - 
\Bigl( (hb\vert_{X'})\dototimes \tau(b\vert_{X''}) \ - \ 
(hb\vert_{X'})\vert_{(H\sqcup X')'} \dototimes \tau((hb\vert_{X'})\vert_{(H\sqcup X')''})\tau(b\vert_{X''})\Bigr).
\end{align*}
Finally, since $\b$ is a linearized monoid, all products above ({\it e.g.}, $hb\vert_{X'}$) represent basis elements in $\rfB[I]$ for appropriate $I$. Deduce that $hx$ belongs to $\Span{\mathscr{S}}$, as needed. A similar computation shows the same for $xh$.

\medskip\noindent
\emph{Claim: The quotient is isomorphic to $\rspec{\tee{\b,\d}{\p{,}\q}}$.}
Clearly any element of the quotient has a coset representative in $\rspec{\tee{\b,\d}{\p{,}\q}} \subseteq \teebar{\h}$. Concretely, 
\[
    b\otimes p_{(X)}\dototimes d\otimes q_{(Y)} \equiv b\vert_{X'}\otimes p_{(X')}\dototimes \tau(b\vert_{X''})d\otimes \theta(p_{(X'')})\otimes q_{(Y)} \mod \rspec{\!\mathcal I}. 
\]
Suppose $h \in \rspec{\tee{\b,\d}{\p{,}\q}}[I\sqcup J]$ satisfies $h\equiv 0 \mod \rspec{\!\mathcal I}$. We argue that $h=0$. 

\smallskip
(1) Pick any ordering of a basis for $\teebar{\h}[I\sqcup J]$ that sorts first by cardinality of $X$. (2) Extract a basis $\tilde{\mathscr{S}}$ for $\rspec{\!\mathcal I}$ from $\mathscr{S}$. (3) The leading term of any 
\[
    \tilde{x} = b\otimes p_{(X)}\dototimes d\otimes q_{(Y)} - b\vert_{X'}\otimes p_{(X')}\dototimes \tau(b\vert_{X''})d\otimes \theta(p_{(X'')})\otimes q_{(Y)}
\]
in $\mathscr{S}$ is $b\otimes p_{(X)}\dototimes d\otimes q_{(Y)}$ (with $X$ not large); and all leading terms in $\mathscr{S}$ are distinct. (4) As $h\in \rspec{\tee{\b,\d}{\p{,}\q}}$, its leading term corresponds to some partitions $(X',Y)$ with $X'$ large. (5) On the other hand, as $h = \sum_{\iota} \alpha_\iota \tilde{x}_\iota$ (for some $\tilde{x}_\iota \in \mathscr{S}$, $\alpha_\iota\in\Bbbk$), its leading term is the same as that of the first $\tilde{x}_\iota$ with $\alpha_\iota\neq0$; but that leading term comes from some partitions $(X,Y)$ with $X$ not large. Conclude all $\alpha_iota$ are zero.

\medskip\noindent
\emph{Claim: $\rspec{\!\mathcal I}$ is a coideal.} 
We must show that any $x$ from $\mathscr{S}$ has the feature that $\Delta(x) \in \teebar{\h}\sdot \rspec{\!\mathcal I} + \rspec{\!\mathcal I}\sdot \teebar{\h}$. A clever choice of adding zero (similar to what was done for the ideal check) gives the result. Again, factors originating from $\p,\d,\q$ don't offer any complications, so we suppress them in the notation. (Though we do make reference to $\p$, as in \eqref{eq:bop-coproduct}, when using the modified Sweedler notation for the coproduct.)

Given $x=b\dototimes1 - b\vert_{X'}\dototimes\tau(b\vert_{X''})$ as above, we have
\begin{gather*}
\Delta_{S,T}(x) = \sum_{(p_{(X)})} b\vert^\downarrow_{X_S} \dototimes 1 \otimes b\vert^\downarrow_{X_T}\dototimes1 \ - \! \sum_{\substack{(p_{(X')})\\(\theta(p_{(X'')}))}} b\vert^\downarrow_{X'_S} \dototimes \tau(b)\vert^\downarrow_{X''_S} \otimes b\vert^\downarrow_{X'_T}\dototimes \tau(b)\vert^\downarrow_{X''_T}
\end{gather*}
where, recall, $(b\vert_{X'})\vert_{{X'}^S} = b\vert_{{X'}^S}$, and we have again suppressed the bijections $\sigma^{X^S}_{X_S}$ in the notation. Recall also that the coproduct in $\p(X)$ is computed factor-by-factor, and $\theta$ is a comonoid map. So the sums $\sum_{(p_{(X)})}$ and $\sum_{\substack{(p_{(X')})\\(\theta(p_{(X'')}))}}$ agree as far as factors from $\p$ and $\q$ are concerned; we write them each as $\sum_{(p_{(X)})}$ in what follows. 
Continuing,
\begin{align*}
&= \sum_{(p_{(X)})} b\vert^\downarrow_{X_S} \dototimes 1 \otimes b\vert^\downarrow_{X_T}\dototimes1 \ - \sum_{(p_{(X)})} b\vert^\downarrow_{X_S} \dototimes 1 \otimes b\vert^\downarrow_{X'_T}\dototimes \tau(b)\vert^\downarrow_{X''_T} \\
& \quad \ + \ \sum_{(p_{(X)})} b\vert^\downarrow_{X_S} \dototimes 1 \otimes b\vert^\downarrow_{X'_T}\dototimes \tau(b)\vert^\downarrow_{X''_T} \ - \sum_{(p_{(X)})} b\vert^\downarrow_{X'_S} \dototimes \tau(b)\vert^\downarrow_{X''_S} \otimes b\vert^\downarrow_{X''_T}\dototimes \tau(b)\vert^\downarrow_{X''_T}
\\
&= \sum_{(p_{(X)})} \bigl(b\vert^\downarrow_{X_S} \dototimes 1\bigr) \otimes \left( b\vert^\downarrow_{X_T}\dototimes1 \ - \ b\vert^\downarrow_{X'_T}\dototimes \tau(b)\vert^\downarrow_{X''_T}\right) \\
& \quad \ + \ \sum_{(p_{(X)})} \left(b\vert^\downarrow_{X_S} \dototimes 1  \ - \ b\vert^\downarrow_{X'_S} \dototimes \tau(b)\vert^\downarrow_{X''_S} \right) \otimes \bigl(b\vert^\downarrow_{X'_T}\dototimes \tau(b)\vert^\downarrow_{X''_T}\bigr),
\end{align*}
which lives in the subspace $\teebar{\h}[S]\otimes \rspec{\!\mathcal I}[T] + \rspec{\!\mathcal I}[S]\otimes \teebar{\h}[T]$, as needed.
\end{proof}

After the lemma, it remains to define candidate factoring bimonoid maps and check that all relevant diagrams commute.

\begin{proof}[Proof of Theorem \ref{th:interpolating}]  
\underline{The map $\hat{f}_{\tau,\theta}$}.~Given a partition $X\vdash I$ with largest block-size $k$, let $X=X^1\sqcup X^2\sqcup \cdots \sqcup X^k$ be refinement of $X$ that groups blocks according to size and put $X^i \vdash S^i$.
We define $\hat{f}_{\tau,\theta} : \tee{\b}{\p} \to \tee{\d}{\q}$ via $(\tau\otimes \theta^{\otimes X})\circ(\mu^{\tee{\b}{\p}}_{S^1,\ldots,S^k}\circ\Delta^{\tee{\b}{\p}}_{S^1,\ldots,S^k})$. Or:
\[
	b\otimes p_{(X)} \ \mapsto \ \bigl(\tau\circ\mu^{\b}_{X^1,\ldots, X^k}\circ\Delta^{\b}_{X^1,\ldots,X^k}\bigr)(b) \otimes \theta(p_{(X)}) = \prod_{i}\tau(b)\vert_{X^i}\otimes \theta(p_{(X)}).
\]
Following the proof of Proposition \ref{th:bop-to-doq}, one verifies that this is a morphism of bimonoids (making heavy use of the fact that $\d$ is commutative).

\medskip
\underline{The maps $\port_r$ and $\port^r$}.~Given $b\otimes p_{(X)}$ as above, and integer $r\geq 2$, write $X=X'\sqcup X''$ for the decomposition of $X$ into large and small blocks, as in the lemma. Say $X'\vdash S$ and $X''\vdash T$.
Define $\port_r:\tee{\b}{\p} \to \rspec{\tee{\b,\d}{\p{,}\q}}$ as the composite $(\id\otimes \hat{f}_{\tau,\theta})\circ\Delta^{\tee{\b}{\p}}_{S,T}$. That is, 
\[
	b\otimes p_{(X)} \ \mapsto \ b\vert_{X'}\otimes p_{(X')}\dototimes \hat{f}_{\tau,\theta}\bigl(b\vert_{X''}\otimes p_{(X'')}\bigr).
\]
Since $\b$ is cocommutative, $\Delta$ is a bimonoid map, and so the composite with $\hat{f}_{\tau,\theta}$ is as well. 

Similarly, if $X\vdash I$ and $Y \vdash J$, define $\port^r:\rspec[r]{\tee{\b,\d}{\p{,}\q}} \to \tee{\d}{\q}$ as the composite $\mu^{\tee{\d}{\q}}_{I,J}\circ(\hat{f}_{\tau,\theta} \otimes \id)$. So: 
\[
	b\otimes p_{(X)}\dototimes d_{Y}\otimes q_{(Y)} \ \mapsto \ \prod_i\tau(b)\vert_{X^i}\cdot d_Y\otimes \theta\bigl(p_{(X)}\bigr)\otimes q_{(Y)}.
\]
This is a bimonoid map since $\d$ is commutative. 

\medskip
\underline{The map $\port_s^r$}.~Consider an element $b\otimes p_{(X)}\dototimes d\otimes q_{(Y)}$ in $\rspec{\tee{\b,\d}{\p{,}\q}}$ with $X\vdash I$ and $Y\vdash J$. (Recall $1\leq r < s <\infty$.) Write $X=X'\sqcup X''$ with large blocks defined now with respect to $s$, and put $X'\vdash S, X''\vdash T$. Take $\port^r_s$ to be the composite $(\id\otimes \mu^{\tee{\d}{\q}}_{T,J})\circ(\id\otimes \hat{f}_{\tau,\theta}\otimes \id)\circ(\Delta^{\tee{\b}{\p}}_{S,T}\otimes \id)$. Again, this is a bimonoid map since $\b$ is cocommutative and $\d$ is commutative. 

\medskip
It remains to check that the requisite identities from \cite{lauve202xinterpolation} hold. Most are straightforward; indeed, the identity $\port^r\circ\port_r = \hat{f}_{\tau,\theta}$ is trivial. We check that $\port_s = \port^r_s\circ\port_r$. 

Put $X=X'\sqcup X''\sqcup X'''$, where: a block $B$ lands in $X',X''$, or $X'''$ according to whether $|B|\geq s$, $s>|B|\geq r$, or $r>|B|$, respectively. Finally, define subsets $R,S,T$ via $X'\vdash R$, $X''\vdash S$, and $X'''\vdash T$. We have the simple computation
\begin{align*}
\port^r_s\circ\port_r &= (\id\otimes \mu^{\tee{\d}{\q}}_{S,T})\circ(\id\otimes \hat{f}_{\tau,\theta}\otimes \id)\circ(\Delta^{\tee{\b}{\p}}_{R,S}\otimes \id)\circ
(\id\otimes \hat{f}_{\tau,\theta})\circ\Delta^{\tee{\b}{\p}}_{RS,T} 
\\
&= (\id\otimes \mu^{\tee{\d}{\q}}_{S,T})\circ(\id\otimes \hat{f}_{\tau,\theta}\otimes \hat{f}_{\tau,\theta})\circ(\Delta^{\tee{\b}{\p}}_{R,S}\otimes \id)\circ\Delta^{\tee{\b}{\p}}_{RS,T} \\
&= (\id\otimes \mu^{\tee{\d}{\q}}_{S,T})\circ(\id\otimes \hat{f}_{\tau,\theta}\otimes \hat{f}_{\tau,\theta})\circ(\id \otimes \Delta^{\tee{\b}{\p}}_{S,T})\circ\Delta^{\tee{\b}{\p}}_{R,ST} \\
&= (\id\otimes \mu^{\tee{\d}{\q}}_{S,T})\circ(\id\otimes \Delta^{\tee{\d}{\q}}_{S,T})\circ(\id \otimes \hat{f}_{\tau,\theta})\circ\Delta^{\tee{\b}{\p}}_{R,ST} \\
&= (\id\otimes \hat{f}_{\tau,\theta})\circ\Delta^{\tee{\b}{\p}}_{R,ST} 
\\
&=\port_s.
\end{align*}
The penultimate step needs some explanation. While \cite[Cor.~8.38(i)]{aguiar2010monoidal} provides that $\Delta^{\h}_{A,B}\mu^{\h}_{A,B}=\id$ for any bimonoid $\h$,\footnote{We thank Mark Denker for sharing this identity with us.} these mappings appear in the reverse order for us. Focusing on the $\b,\d$ components of $\mu^{\tee{\d}{\q}}_{S,T} \circ \Delta^{\tee{\d}{\q}}_{S,T} \circ \hat{f}_{\tau,\theta}$, we analyze the map
\[
    \mu^{\d}_{X'',X'''}\circ\Delta^{\d}_{X'',X'''}\circ\tau\circ\mu^{\b}_{X^1,\ldots, X^k}\circ\Delta^b_{X^1,\ldots, X^k}.
\]
Here $(X^1\sqcup \cdots \sqcup X^{r-1})\vdash T$ and $(X^r\sqcup \cdots \sqcup X^{k})\vdash S$, with $k=s-1$.) Since $\tau$ is a bimonoid map, we have
\begin{align*}
\mu^{\d}_{X'',X'''}\circ\Delta^{\d}_{X'',X'''}\circ\tau\circ\mu^{\b}_{X^1,\ldots, X^k}
&=
\mu^{\d}_{X'',X'''}\circ\Delta^{\d}_{X'',X'''}\circ\mu^{\d}_{X^1,\ldots, X^k}\circ\tau.
\end{align*}
Also, since $\d$ is commutative, we have $\mu^{\d}_{X^1,\ldots, X^k} = \mu^{\d}_{X'',X'''}\circ(\mu^{\d}_{X^r,\ldots, X^{k}}\otimes \mu^{\d}_{X^1,\ldots, X^{r-1}})$. So
the middle terms $\Delta^{\d}_{X'',X'''}\circ\mu^{\d}_{X'',X'''}$ cancel, and we're left with what we need.
\end{proof}

\begin{remark} If $\d = \bfE$, then $\hat{f}_{\tau,\theta} = f_{\tau,\theta}$ (and is surjective whenever $\theta$ is), since $\cfS(\q)$ is a free commutative monoid over $\q$.
\end{remark}

\begin{example}
    We close this section by interpolating between our running examples $\tee{\bfG}{\bfL_+}$ and $\tee{\bfE}{\cyc}$. Let $\theta: \bfL_+\to\cyc$ be the map sending a linear order to the obvious cycle. The reader may check that $\theta$ is a map of positive comonoids. Let $\tau:\bfG\to \bfE$ be the bimonoid map sending each $g\in\rfG[I]$ to $\ast_I \in \rfE[I]$. The reader may check that $\tau$ also intertwines with restrictions. Thus, $\bigl\{\rspec{\tee{\bfG,\bfE}{\bfL_+{,}\cyc}}\bigr\}_{r\geq1}$ is a one-parameter family of Hopf monoids interpolating between $\tee{\bfG}{\bfL_+}$ and $\tee{\bfE}{\cyc}$. 
    See Figure \ref{fig:GoL-interpolation}.
\begin{figure}[htb]
\tikzset{every picture/.style={scale=.75}}
\begin{subcaptionblock}[b]{.25\textwidth}{%
\centering
\begin{tikzpicture}[vert/.style={draw,fill,circle,inner sep=0,minimum height=5pt},
    	purp/.style={color=purple!30!white,inner sep=.75pt}
    	]
    
    
      \draw[style=purp,rounded corners=10,line width=.75pt]
         (0,0) rectangle (4.75,4);
      \draw[style=purp,line width=1.25pt] (4.5,0) -- (4.5,4);
    
      \node[style=vert, label=above:{\small$a$}] (a) at (.4,3.2) {};
      \node[style=vert, label=above:{\small$c|b$}] (cb) at (1.2,2.5) {};
      \node[style=vert, label=above right:{\small$e|d$}] (ed) at (2.4,3.2) {};
      \node[style=vert, label=below:{\small$y$}] (y) at (.55,1.5) {};
      \node[style=vert, label=below left:{\small$h|i$}] (hi) at (1.7,.75) {};
      \node[style=vert, label=above right:{\small$j|f|k$}] (jfk) at (2.9,1.7) {};
      \node[style=vert, label={[label distance=-0.05cm]190:{\small$m|x$}}] (xm) at (4.0,.7) {};
     
      \draw[very thick] (y) -- (a) -- (cb);
      \draw[very thick] (xm) -- (jfk);
      \draw[very thick] (hi) -- (ed) -- (jfk) -- (hi);
    \end{tikzpicture}
\caption{$r=1$.}
}\end{subcaptionblock}
\ \raisebox{13ex}{$\leadsto$} \ 
\begin{subcaptionblock}[b]{.27\textwidth}{%
\centering
    \begin{tikzpicture}[vert/.style={draw,fill,circle,inner sep=0,minimum height=5pt},
    	purp/.style={color=purple!30!white,inner sep=.75pt}
    	]
    
    
      \draw[style=purp,rounded corners=10,line width=.75pt]
         (.6,0) rectangle (5.55,4);
      \draw[style=purp,line width=1.25pt] (4.65,0) -- (4.65,4);
    
      \node[style=vert, label=above:{\small$c|b$}] (cb) at (1.2,2.5) {};
      \node[style=vert, label=above right:{\small$e|d$}] (ed) at (2.4,3.2) {};
      \node[style=vert, label=below left:{\small$h|i$}] (hi) at (1.7,.75) {};
      \node[style=vert, label=above right:{\small$j|f|k$}] (jfk) at (2.9,1.7) {};
      \node[style=vert, label={[label distance=-0.05cm]190:{\small$m|x$}}] (xm) at (4.0,.7) {};
      \draw[very thick] (xm) -- (jfk);
      \draw[very thick] (hi) -- (ed) -- (jfk) -- (hi);
      \node[label=above:{\small$(a)$}] (a) at (5.1,2.8) {};
      \node[label=below:{\small$(y)$}] (y) at (5.1,1.5) {};

    \end{tikzpicture}
\caption{$r=2$.}
}\end{subcaptionblock}
\ \raisebox{13ex}{$\leadsto$} \ 
\begin{subcaptionblock}[b]{.25\textwidth}{%
\centering
    \begin{tikzpicture}[vert/.style={draw,fill,circle,inner sep=0,minimum height=5pt},
    	purp/.style={color=purple!30!white,inner sep=.75pt}
    	]
    
    
      \draw[style=purp,rounded corners=10,line width=.75pt]
         (2.5,0) rectangle (7,4);
      \draw[style=purp,line width=1.25pt] (3.85,0) -- (3.85,4);
    
      \node[style=vert, label=above:{\small$j|f|k$}] (jfk) at (3.2,1.7) {};
      \node[label={\small$(cb)$}] (cb) at (5.1,1.9) {};
      \node[label={\small$(ed)$}] (ed) at (6,3) {};
      \node[label={\small$(hi)$}] (hi) at (5,.05) {};
      \node[label={\small$(mx)$}] (xm) at (6.3,.05) {};
      \node[label={\small$(a)$}] (a) at (4.3,2.8) {};
      \node[label={\small$(y)$}] (y) at (4.3,1.1) {};

    \end{tikzpicture}
\caption{$r=3$.}
}\end{subcaptionblock}
\caption{Interpolation in $\rspec{\tee{\bfG,\bfE}{\bfL_+{,}\cyc}}$.}
\label{fig:GoL-interpolation}
\end{figure}
\end{example}

\section{Further Questions}
\label{sec:further-questions}


\subsubsection*{C(o)operadic considerations} Recall from \cite[App. B]{aguiar2010monoidal} that $\bfL_+$ is an operad. That is, a monoid in the category $(\Spec_+,\scirc)$. So the map $\chi :\bfL_+\scirc\h_+ \to \h_+$ required in Section \ref{sec:towards-a-up} amounts to $\h_+$ being a (left) $\bfL_+$-module in $(\Spec_+,\scirc)$. Towards a universal property for $\tee{\b}{\p}$,\footnote{In \cite{aguiar2010monoidal}, Aguiar and Mahajan translate a result of Fresse \cite[Sec. 2.1.10]{fresse2004koszul} to the present context to give a universal property for our species $\b\scirc\p$ in terms of free modules over operads. But the hypotheses (on maps $\zeta$) and conclusions (on $\tee{\b}{\p}$ and maps $\hat\zeta$) are not those asked for in Sections \ref{sec:two-functors} and \ref{sec:towards-a-up}.} we ask when are linearized bimodules $\b_+$ also operads---and is it sufficient to take $\b_+$-modules as the (comonoidal) targets of $\p$?

Along the same lines, recall the Hopf monoid of planar forests $\vec{\mathbf{F}}$, which is $\cfT(\vec{\a})$ as a monoid, but has a different comonoid structure (here $\vec{\a}$ indicates planar rooted trees). Can we further generalize our $\tee{\b}{\p}$, incorporating (co)operadic structures, so that $\vec{\mathbf{F}}$ is captured as the special case $(\b=\bfL, \p=\vec{\a})$?

\subsubsection*{Relaxing hypotheses}
 The proof of Theorem \ref{th:bop} suggests the stringent hypotheses on $\b$ are necessary. However, if $\p$ is the trivial comonoid, we may allow arbitrary Hopf structures for $\b$. (Then $\tee{\b}{\p}$ is simply a $\p$-decorated  version of $\b$.) We close by asking, are there twisted versions of \eqref{eq:bop-product} and \eqref{eq:bop-coproduct} that allow one to relax the cocommutative or linearized hypothesis on $\b$?
 


\printbibliography

@book {aguiar2010monoidal,
    AUTHOR = {Aguiar, Marcelo and Mahajan, Swapneel},
     TITLE = {Monoidal functors, species and {H}opf algebras},
    SERIES = {CRM Monograph Series},
    VOLUME = {29},
      NOTE = {With forewords by Kenneth Brown and Stephen Chase and Andr\'{e}
              Joyal},
 PUBLISHER = {American Mathematical Society, Providence, RI},
      YEAR = {2010},
     PAGES = {lii+784},
      ISBN = {978-0-8218-4776-3},
       DOI = {10.1090/crmm/029},
}

@book {bergeron1998combinatorial,
    AUTHOR = {Bergeron, F. and Labelle, G. and Leroux, P.},
     TITLE = {Combinatorial species and tree-like structures},
    SERIES = {Encyclopedia of Mathematics and its Applications},
    VOLUME = {67},
      NOTE = {Translated from the 1994 French original by Margaret Readdy,
              With a foreword by Gian-Carlo Rota},
 PUBLISHER = {Cambridge University Press, Cambridge},
      YEAR = {1998},
     PAGES = {xx+457},
      ISBN = {0-521-57323-8},
}

@incollection {fresse2004koszul,
    AUTHOR = {Fresse, Benoit},
     TITLE = {Koszul duality of operads and homology of partition posets},
 BOOKTITLE = {Homotopy theory: relations with algebraic geometry, group
              cohomology, and algebraic {$K$}-theory},
    SERIES = {Contemp. Math.},
    VOLUME = {346},
     PAGES = {115--215},
 PUBLISHER = {Amer. Math. Soc., Providence, RI},
      YEAR = {2004},
      ISBN = {0-8218-3285-9},
       DOI = {10.1090/conm/346/06287},
}

@article {hivert2008commutative,
    AUTHOR = {Hivert, Florent and Novelli, Jean-Christophe and Thibon,
              Jean-Yves},
     TITLE = {Commutative combinatorial {H}opf algebras},
   JOURNAL = {J. Algebraic Combin.},
  FJOURNAL = {Journal of Algebraic Combinatorics. An International Journal},
    VOLUME = {28},
      YEAR = {2008},
    NUMBER = {1},
     PAGES = {65--95},
      ISSN = {0925-9899},
       DOI = {10.1007/s10801-007-0077-0},
}

@article{joni1979coalgebras,
    AUTHOR = {Joni, Saj-nicole A. and Rota, Gian-Carlo},
     TITLE = {Coalgebras and bialgebras in combinatorics},
   JOURNAL = {Stud. Appl. Math.},
  FJOURNAL = {Studies in Applied Mathematics},
    VOLUME = {61},
      YEAR = {1979},
    NUMBER = {2},
     PAGES = {93--139},
      ISSN = {0022-2526},
       DOI = {10.1002/sapm197961293},
}

@article{joyal1981species,
    AUTHOR = {Joyal, Andr\'{e}},
     TITLE = {Une th\'{e}orie combinatoire des s\'{e}ries formelles},
   JOURNAL = {Adv. in Math.},
  FJOURNAL = {Advances in Mathematics},
    VOLUME = {42},
      YEAR = {1981},
    NUMBER = {1},
     PAGES = {1--82},
      ISSN = {0001-8708},
       DOI = {10.1016/0001-8708(81)90052-9},
}

@unpublished{lauve202xinterpolation,
  author = {Lauve, Aaron and Lazzeroni, Anthony},
  title  = {Interpolation in Species and a lift of the {H}opf algebra of $r$-Quasisymmetric Functions},
  note   = {{\it In preparation}},
  month  = {},
  year   = {},
}

@article {marberg2016duality,
    AUTHOR = {Marberg, Eric},
     TITLE = {Strong forms of self-duality for {H}opf monoids in species},
   JOURNAL = {Trans. Amer. Math. Soc.},
  FJOURNAL = {Transactions of the American Mathematical Society},
    VOLUME = {368},
      YEAR = {2016},
    NUMBER = {8},
     PAGES = {5433--5473},
      ISSN = {0002-9947},
       DOI = {10.1090/tran/6506},
}

@article {marberg2015linearization,
    AUTHOR = {Marberg, Eric},
     TITLE = {Strong forms of linearization for {H}opf monoids in species},
   JOURNAL = {J. Algebraic Combin.},
  FJOURNAL = {Journal of Algebraic Combinatorics. An International Journal},
    VOLUME = {42},
      YEAR = {2015},
    NUMBER = {2},
     PAGES = {391--428},
      ISSN = {0925-9899},
       DOI = {10.1007/s10801-015-0585-2},
}

@article{schmitt1993hopf,
    AUTHOR = {Schmitt, William R.},
     TITLE = {Hopf algebras of combinatorial structures},
   JOURNAL = {Canad. J. Math.},
  FJOURNAL = {Canadian Journal of Mathematics. Journal Canadien de
              Math\'ematiques},
    VOLUME = {45},
      YEAR = {1993},
    NUMBER = {2},
     PAGES = {412--428},
      ISSN = {0008-414X,1496-4279},
       DOI = {10.4153/CJM-1993-021-5},
}

@article{stover1993equivalence,
    AUTHOR = {Stover, Christopher R.},
     TITLE = {The equivalence of certain categories of twisted {L}ie and
              {H}opf algebras over a commutative ring},
   JOURNAL = {J. Pure Appl. Algebra},
  FJOURNAL = {Journal of Pure and Applied Algebra},
    VOLUME = {86},
      YEAR = {1993},
    NUMBER = {3},
     PAGES = {289--326},
      ISSN = {0022-4049,1873-1376},
       DOI = {10.1016/0022-4049(93)90106-4},
}

@article {white2020cohen,
    AUTHOR = {White, Jacob A.},
     TITLE = {On {C}ohen-{M}acaulay {H}opf monoids in species},
   JOURNAL = {S\'{e}m. Lothar. Combin.},
  FJOURNAL = {S\'{e}minaire Lotharingien de Combinatoire},
    VOLUME = {84B},
      YEAR = {2020},
     PAGES = {Art. 84, 12},
}

@article {white2025chromatic,
    AUTHOR = {White, Jacob A.},
     TITLE = {Chromatic quasisymmetric class functions for combinatorial
              {H}opf monoids},
   JOURNAL = {European J. Combin.},
  FJOURNAL = {European Journal of Combinatorics},
    VOLUME = {124},
      YEAR = {2025},
     PAGES = {Paper No. 104055, 23},
      ISSN = {0195-6698},
       DOI = {10.1016/j.ejc.2024.104055},
}

\end{document}